\documentclass[ASNA]{USG}

\usepackage{bm}
\usepackage{color}
\usepackage{amsfonts}
\usepackage{amsmath}
\usepackage{amssymb}
\usepackage{graphicx}
\usepackage{listings} 
\usepackage{multirow}
\usepackage{enumitem}
\usepackage{hyperref}
\usepackage{lineno}
\usepackage{indentfirst}
\usepackage{algorithm}
\usepackage{algpseudocode}
\usepackage{booktabs}
\usepackage{appendix}
\usepackage{comment}

\newcommand{\ignore}[1]{}

\begin{document}

\title{A Linear Variable-Step Embedded ETD Scheme with Uniform-in-Time Stability for the 2D Navier--Stokes Equations}

\author[1]{Haifeng Wang}
\author[1]{Xiaoming Wang}
\author[2,1]{Min Zhang}

\corres{Xiaoming Wang (\email{wxm@eitech.edu.cn})}

\address[1]{\orgdiv{School of Mathematical Sciences}, \orgname{Eastern Institute of Technology}, \orgaddress{\state{Zhejiang}, \city{Ningbo}, \country{China}}}

\address[2]{\orgdiv{School of Mathematical Sciences}, \orgname{Shanghai Jiao Tong University}, \orgaddress{\city{Shanghai}, \country{China}}}

\newtheorem{exm}{Example}

\abstract[ABSTRACT]{
We propose a linear variable-step exponential time-differencing method for the incompressible Navier--Stokes equations in vorticity--streamfunction formulation on a two-dimensional periodic box. The method consists of a second-order scheme and an embedded first-order variant, yielding a natural mechanism for adaptive time stepping and a posteriori error control. Each time step requires only uniquely solvable linear problems: two heat equation solves, efficiently handled by Fourier methods in the periodic setting, and one linear scalar auxiliary-variable equation, evaluated via Laplace transform and Talbot's numerical inverse transform.

The construction combines the ETD framework, a mean-reverting scalar auxiliary variable (mr-SAV), and second-order extrapolation of the nonlinear term. The mean-reverting correction enables long-time stability while preserving full linearity, distinguishing the method from related mr-SAV schemes that require nonlinear algebraic solves. We prove unconditional long-time stability: for uniformly bounded $L^2$ forcing, the discrete vorticity remains bounded in $L^\infty(0,\infty;L^2)$ for all Reynolds numbers and time-step sizes. Numerical experiments demonstrate second-order accuracy, long-time stability, and effective adaptive error control.
}

\keywords{incompressible Navier-Stokes equation | exponential time-differencing (ETD) | mean reverting scalar auxiliary variable (mr-SAV) | multistep scheme (MS) | long time stability | second-order accuracy | variable step | time-adaptive scheme}
%% PACS codes here, in the form: \PACS code \sep code
%% MSC codes here, in the form: \MSC code \sep code
%% or \MSC[2008] code \sep code (2000 is the default)
%\MSC[2020] 65M12 \sep 65L04 \sep 65L05 \sep 76D05 

%\end{frontmatter}

% \linenumbers

%% main text

%\linenumbers

\maketitle
\section{Introduction}

The incompressible Navier--Stokes equations (NSE) are a central model in fluid dynamics and continue to serve as a fundamental testbed for the design and analysis of time-marching methods for nonlinear dissipative PDEs. In this work, we consider the two-dimensional NSE in vorticity--streamfunction formulation on a periodic box,
\begin{equation}\label{eq:nse-vorticity-intro}
\begin{aligned}
\omega_t + \bm u \cdot \nabla \omega &= \nu \Delta \omega + f,\\
\Delta \psi &= \omega, \qquad \bm u = \nabla^\perp \psi := (\partial_y \psi,-\partial_x \psi),
\end{aligned}
\end{equation}
supplemented with periodic boundary conditions and the usual zero-mean conditions on the vorticity $\omega$, streamfunction $\psi$, velocity $\bm u$, and forcing $f$. {Here $\nu>0$ is the kinematic viscosity.

For long-time simulation, a numerical method should capture not only short-time accuracy but also the dissipative structure of the underlying dynamics. When the forcing is uniformly bounded in time, the continuous NSE admits a uniform-in-time bound in the natural energy/enstrophy space; this property lies behind the existence of absorbing sets, attractors, and long-time statistical states, and is therefore indispensable in the numerical approximation of asymptotic dynamics and time-averaged observables \cite{constantin1988navier,foias2001navier,majda2006nonlinear,temam2012infinite}. A time discretization that fails to inherit such control may produce spurious growth or distorted long-time statistics even when it performs satisfactorily over moderate time intervals.

At the same time, practical long-time computations strongly benefit from time adaptivity. Solutions of the NSE often alternate between relatively smooth epochs and episodes of rapid variation, so fixed-step methods can be unnecessarily expensive. This makes variable-step and, more generally, variable-order/variable-step strategies attractive from a computational point of view. For fluid problems, adaptive multistep and embedded methods of variable-step or variable-step/variable-order (VOVS/VSVO) type have been studied, for instance, by DeCaria and collaborators and by Layton and collaborators; see, e.g., \cite{li2020adaptive,decaria2021embedded,decaria2021variable}. These works demonstrate the practical value of adaptive low-complexity time integrators for incompressible flow, especially in conjunction with IMEX or BDF-type frameworks.

The construction of adaptive methods that are both efficient and provably stable for the NSE remains challenging, however. Fully implicit schemes enjoy strong stability properties but typically require nonlinear solves or the repeated solution of nontrivial linear systems at each step. IMEX and linearly implicit methods reduce this cost by treating the nonlinearity explicitly, but their long-time stability theory under variable step sizes is much more delicate. Exponential time-differencing (ETD) methods are particularly attractive in periodic settings, where the linear viscous part can be handled very efficiently in Fourier space. They have been widely used for stiff semilinear problems and have also been explored for fluid equations; nevertheless, rigorous long-time stability results for higher-order ETD discretizations of the forced NSE remain scarce.

A recent line of work has shown that scalar auxiliary variable ideas can be adapted beyond gradient flows to help stabilize time-marching methods for the NSE \cite{huang2022new,yang2021new,li2020sav}. Moreover, mean-reverting scalar auxiliary variable (mr-SAV) formulations introduce an additional scalar variable designed to encode a dissipative correction compatible with the forced long-time dynamics. In our earlier work \cite{WangWangZhang2026}, we combined this idea with ETD to construct a variable-step embedded framework with favorable long-time stability properties. That approach, however, involves the solution of a nonlinear algebraic equation for the scalar auxiliary variable at each time step. While this nonlinear solve is scalar and can be carried out efficiently, it is still desirable---both conceptually and computationally---to develop a fully linear alternative.

The purpose of the present paper is to provide such a scheme. We develop a \emph{linear} variable-step embedded ETD method for the two-dimensional NSE in vorticity--streamfunction form. The method consists of a second-order scheme together with an embedded first-order companion, thereby providing a natural mechanism for adaptive time stepping and a posteriori error estimation. The key design combines ETD for the viscous part, explicit multistep treatment of the nonlinear advection, and a new mr-SAV formulation arranged so that each time step requires only linear subproblems. More precisely, in the periodic setting the update reduces to the evaluation of two time-dependent heat equations, together with a linear equation for the scalar auxiliary variable, which is treated through Laplace transform and Talbot's algorithm for numerical inverse
Laplace transform. Thus, in contrast to \cite{WangWangZhang2026}, the present method avoids any nonlinear algebraic solve at the time-discrete level.

Our main analytical result shows that this simplification does not come at the expense of long-time stability. Roughly speaking, we prove that if the external forcing is uniformly bounded in $L^2$, then the discrete vorticity generated by the second-order scheme remains uniformly bounded in $L^\infty(0,\infty;L^2)$, for arbitrary Reynolds numbers and without any restriction on the time-step sizes. A precise theorem statement is given in Section~\ref{sec:4}. This establishes a long-time stability property for a variable-step ETD scheme of genuinely higher order in the presence of nontrivial forcing.

In addition to the main embedded ETD method, we also report a preliminary variable-step/variable-order (VSVO) experiment based on combining the adaptive framework with a second-order variable-step BDF2 discretization for the NSE. This VSVO variant is included as a secondary computational exploration rather than a main theoretical contribution. The numerical tests indicate that this alternative adaptive strategy can also be used successfully in some regimes.
%, although the most robust performance in the present study is delivered by the new linear embedded ETD--mr-SAV scheme.

The contributions of this paper can be summarized as follows.
\begin{itemize}
    \item We construct a new fully linear and uniquely solvable embedded first--second order variable-step ETD scheme for the forced two-dimensional NSE in vorticity--streamfunction formulation.
    \item We show that the second-order member of the pair is long-time stable in the sense of a uniform-in-time $L^2$ bound for the discrete vorticity, under uniformly bounded forcing, for arbitrary Reynolds numbers and arbitrary step sizes.
    \item We design the method so that only linear subproblems are solved at each time step, thereby distinguishing the present approach from our earlier ETD--mr-SAV scheme in \cite{WangWangZhang2026}, which requires a nonlinear algebraic solve for the auxiliary variable.
    \item We demonstrate through numerical experiments that the embedded pair provides effective adaptive error control and captures the expected second-order temporal accuracy and long-time stability.
    \item We also test a VSVO strategy, providing an additional adaptive option of possible independent interest.
\end{itemize}

The paper is organized as follows. In Section~\ref{sec:2}, we introduce the functional setting and the mr-SAV reformulation. Section~\ref{sec:3} presents the linear embedded ETD scheme and its implementation. Section~\ref{sec:4} contains the long-time stability analysis and the statement of the main theorem. Section~\ref{sec:5} reports numerical experiments on accuracy, adaptivity, and long-time behavior, including the auxiliary VSVO tests. Concluding remarks are given in Section~\ref{sec:6}.

\section{Preliminaries and the extended system with mean-reverting SAV reformulation}\label{sec:2}

In this section, we introduce a continuous extended system for the incompressible Navier--Stokes equations based on a mean-reverting scalar auxiliary variable. When the auxiliary variable is initialized at its equilibrium value, the extended system reduces exactly to the original NSE. This reformulation will serve as the starting point for the numerical developments in the next section. Throughout the paper, we denote by $\langle \cdot,\cdot\rangle$ the duality pairing between $\dot H^1_{\mathrm{per}}$ and $H^{-1}_{\mathrm{per}}$ induced by the $L^2$ inner product, and by $\|\cdot\|$ the norm in $L^2(\Omega)$.

We first rewrite the Navier--Stokes equations \eqref{eq:nse-vorticity-intro} in the form
\begin{equation}\label{eqn:vs_ns}
\begin{aligned}
    \frac{\partial \omega}{\partial t} + \nu \mathcal L \omega + B(\omega,\omega) &= f,\\
    \Delta \psi = \omega,\qquad \bm u &= \nabla^\perp \psi := (\partial_y\psi,-\partial_x\psi),
\end{aligned}
\end{equation}
where
\[
\mathcal L\omega = -\Delta \omega, \qquad B(\omega,\omega) = \nabla^{\bot}(\Delta^{-1}\omega)\cdot \nabla \omega=\bm u\cdot\nabla\omega .
\]
In the vorticity--streamfunction formulation, the velocity field is automatically divergence free. Accordingly, we work in the mean-zero periodic Sobolev spaces
\[
H=\Bigl\{\omega\in \dot H^0_{\mathrm{per}}(\Omega): \int_\Omega \omega\,dx=0\Bigr\},\qquad
V=\Bigl\{\omega\in \dot H^1_{\mathrm{per}}(\Omega): \int_\Omega \omega\,dx=0\Bigr\}.
\]

The present analysis is carried out in the vorticity--streamfunction setting, which is particularly convenient for periodic domains and for the enstrophy-based estimates used below. Analogous ideas can also be formulated for the primitive-variable NSE, although some of the sharper estimates available here rely on the special structure of the vorticity equation. For this reason, we restrict attention to \eqref{eqn:vs_ns} throughout the paper.

To prepare for the construction of long-time stable time discretizations, we introduce a scalar auxiliary variable \(q(t)\) and consider the extended system
\begin{subequations}\label{eqn:mr_sav}
\begin{align}
    \frac{\partial \omega}{\partial t} + \nu \mathcal L\omega + q\,B(\omega,\omega) &= f, \label{eqn:mr_sav_1}\\
    \frac{dq}{dt} + \gamma q &= \langle B(\omega,\omega),\omega\rangle + \gamma. \label{eqn:mr_sav_2}
\end{align}
\end{subequations}
Here \(\gamma\ge 0\) is a user-specified mean-reverting parameter. A basic observation is that \(q=1\) is the equilibrium value of the scalar auxiliary variable. Indeed, if \(q(0)=1\), then by the enstrophy conservation property of the nonlinear advection term,
\[
\langle B(\omega,\omega),\omega\rangle = 0,
\]
equation \eqref{eqn:mr_sav_2} yields
\[
\frac{dq}{dt}+\gamma(q-1)=0,
\]
and hence
\[
q(t)\equiv 1 \qquad \text{for all } t\ge 0.
\]
In this case, \eqref{eqn:mr_sav} reduces exactly to the original system \eqref{eqn:vs_ns}. Thus, the extended formulation is consistent with the NSE at the continuous level.

More generally, for arbitrary initial data \(q(0)=q_0\), one has
\[
|q(t)-1|
\le |q_0-1|e^{-\gamma t}
+\frac{1-e^{-\gamma t}}{\gamma}
\bigl\|\langle B(\omega,\omega),\omega\rangle\bigr\|_{L^\infty(0,t)}
\]
for \(\gamma>0\). In particular, the damping term drives \(q(t)\) toward its equilibrium value \(1\), with deviations controlled by the nonlinear inner product and the mean-reverting parameter $\gamma$. At the continuous level, the identity
\[
\langle B(\omega,\omega),\omega\rangle=0
\]
implies that \(q(t)\) relaxes exactly to \(1\). This mean-reverting mechanism is the key structural feature of the reformulation.

The role of \(q(t)\) is to facilitate stable treatments of the nonlinear advection term while preserving consistency with the original NSE. In particular, although the quantity \(\langle B(\omega,\omega),\omega\rangle\) vanishes at the continuous level, its discrete counterpart need not do so exactly. The parameter \(\gamma>0\) introduces a restoring mechanism that suppresses the accumulation of such deviations over long times by driving the auxiliary variable back toward \(1\). This feature is essential for the long-time stability mechanism exploited later in the paper.

When \(\gamma=0\), system \eqref{eqn:mr_sav} reduces to the continuous counterpart of a standard SAV/ZEC-type reformulation; see, for example, \cite{yang2021new,yang2021novel,zhang2024unified,li2022new}. In the present work, however, we focus on the case \(\gamma>0\), where the dissipative mean-reverting term provides the additional control needed for robust long-time behavior. For this reason, following \cite{han2025highly,coleman2024efficient,WangWangZhang2026}, we refer to \eqref{eqn:mr_sav} as the \emph{mean-reverting scalar auxiliary variable} (mr-SAV) extended system.

We conclude this section by emphasizing one point relevant to the present paper. In our earlier ETD--mr-SAV work \cite{WangWangZhang2026}, the stabilization mechanism was coupled with a nonlinear correction to the vorticity field. By contrast, the reformulation introduced here will be used in the next section to derive a different fully linear and uniquely solvable time-discrete treatment, while retaining the same continuous mean-reverting structure.

\ignore{
In this section, we introduce an extended system that includes a mean-reverting scalar auxiliary variable  for the incompressible NS equations. The auxiliary system reduces to the original NSE if the initial value of the scalar auxiliary variable is set to one. This extended system forms the foundation for our construction of efficient and unconditionally long-time stable numerical schemes. Throughout this paper, we denote by $\langle \cdot , \cdot \rangle$ the duality pairing between $\dot{H}^1_{\mathrm{per}}$ and $H^{-1}_{\mathrm{per}}$ induced by the $L^2$ inner product, and by $\Vert \cdot \Vert$ the norm in $L^2(\Omega)$. 

Firstly, we rewrite the Navier--Stokes equations \eqref{eq:nse-vorticity-intro} into the following standard form
\begin{equation}\label{eqn:vs_ns}
\begin{aligned}
    &\frac{\partial \omega}{\partial t} + \nu \mathcal{L} \omega + B(\omega, \omega) = f, \\
    &\Delta \psi = \omega,\quad \omega = \nabla^\bot \psi := (\partial_y \psi, - \partial_x \psi),
\end{aligned}
\end{equation}
where $\mathcal{L \omega} = -\Delta \omega$, $B(\omega, \omega) = \omega \cdot \nabla \omega$. Within the vorticity-streamfunction formulation, the velocity field automatically satisfies divergence free condition. Accordingly, the analysis of the vorticity $\omega$ is conducted in the framework of the mean-zero periodic Sobolev space 
\begin{align*}
    H = \left\{\omega\in\dot{H}_{\mathrm{per}}^0(\Omega): \int_\Omega \omega dx = 0\right\},\quad  V = \left\{\omega\in \dot{H}_{\mathrm{per}}^1(\Omega): \int_\Omega\omega dx = 0  \right\}.
\end{align*}

For \eqref{eqn:vs_ns}, the algorithm design and the uniform-in-time enstrophy estimate presented in this work can be naturally extended to the primitive variable formulation of the NSE although some of the finer estimates such as the uniform-in-time estimate of the palinstrophy may be available in this vorticity-streamfunction formulation with explicit treatment of the nonlinear advection term, but not necessarily possible in the primitive variable case. For brevity and clarity, we focus on the vorticity-streamfunction formulation of the NSE. It is worth noting that all theoretical results and algorithmic implementations derived for the vorticity-streamfunction formulation can be straightforwardly extended to the primitive variable system through the Leray-Hopf orthogonal projection operator.

To design our first efficient algorithm for the NSE \eqref{eqn:vs_ns} while preserving the long time stability of the system, we introduce a scalar auxiliary variable $q(t)$ and the following extended system:
\begin{subequations}\label{eqn:mr_sav}
\begin{align}
    &\frac{\partial \omega}{\partial t} + \nu \mathcal{L} \omega + q\,B(\omega, \omega) = f, \label{eqn:mr_sav_1}\\
    &\frac{\mathrm{d} q}{\mathrm{d} t} + \gamma q = \langle B(\omega, \omega), \omega \rangle + \gamma. \label{eqn:mr_sav_2}
\end{align}
\end{subequations}
Here,  $\gamma\ge 0$ is a user-specified mean-reverting parameter. It is straightforward to verify that 
 \[
 |q(t)-1| \le |q_0-1| e^{-\gamma t} + \frac{1}{\gamma}(1 - e^{-\gamma t})\|\langle B(\omega, \omega), \omega \rangle\|_{L^\infty((0,t))} \; %\longrightarrow\; 0,\quad \text{as } t\to \infty
\]
%\begin{equation}
%q(t) = q_0 e^{-\gamma t} + (1 - e^{-\gamma t})\ \longrightarrow\ 1,\quad \text{as } t\to \infty.
%\end{equation}
for all $q_0$ and an arbitrary trilinear term $\langle B(\omega, \omega), \tilde{\omega} \rangle$.
Utlizing the enstrophy conservation fact of the nonlinear advection term, i.e., $\langle B(\omega, \omega), \omega \rangle=0$, we deduce, at the continuous in time level,
$$q(t)\longrightarrow\; 1,\quad \text{as } t\to \infty. $$
In the special case of $q(0)=1$,  $q(t)\equiv 1$ for all $t \ge 0$, and we recover the original model \eqref{eqn:vs_ns}.

The introduction of $q(t)$ is aimed at facilitating the explicit treatment of the nonlinear term while the damping term (mean-reverting term) helps to maintain the stability. For $\gamma>0$, the long-time stability of the discretization of  \eqref{eqn:mr_sav} via BDF2 and Gear's extropolation has been established in \cite{han2025highly,coleman2024efficient}. If $\gamma=0$, the extended system \eqref{eqn:mr_sav} reduces to the continuous formulation of the standard SAV-ZEC formulation. This method has been thoroughly studied and successfully applied to a variety of fluid dynamics problems, as documented in \cite{yang2021new,yang2021novel,zhang2024unified,li2022new}. However, since the nonlinear advection term fails to guarantee exact skew-symmetry over long-time simulations at the discrete level, the value of $q(t)$ may deviate significantly from $1$ when setting $\gamma=0$ and $q(0)=1$. As a result, the simulation results may become irrelevant to the original system at relatively large times. The introduction of $\gamma$ renders \eqref{eqn:mr_sav_2} a dissipative system. Thus, even if the nonlinear inner product $\langle B(\omega, \omega), \omega \rangle$ is not strictly zero at the discrete level, the impact on the long time value of $q(t)-1$ is controlled by a factor of $\frac{1}{\gamma}$ since the auxiliary variable reverts to the equilibrium value of 1 by design. For this reason, we refer to the reformulated system \eqref{eqn:mr_sav} as the \textbf{mean-reverting scalar auxiliary variable (mr-SAV)} extended system following \cite{han2025highly,coleman2024efficient, WangWangZhang2026}.
}

\section{Exponential time difference mr-SAV schemes}\label{sec:3}
Building upon the mr-SAV extended systems presented in \eqref{eqn:mr_sav}, we proceed to develop two exponential time difference (ETD) time stepping schemes for numerical solution. 

Given an arbitrary terminal time $T$ and a set of non-overlapping time nodes $0 = t_0 < t_1 < \cdots < t_N = T$ with the $k$th time step size $\tau_k = t_k -t_{k-1}$, we specify that the time partition here can be non-uniform. Let $\Psi^{n}$ denote the numerical approximation of $\Psi(t)$ at the time $t_n$, and abbreviate $\Psi(t_{n} + \frac{1}{2} \tau)$ as $\Psi^{n + \frac{1}{2}}$. We further introduce the extrapolation formula for $\Psi(t_{n} + \frac{1}{2} \tau)$, given by $\tilde{\Psi}^{n + \frac{1}{2}} = \frac{\tau_{n+1} + 2\tau_{n}}{2\tau_{n}}\Psi^{n} - \frac{\tau_{n+1}}{2\tau_{n}} \Psi^{n-1}$.

%%%%
\ignore{
\subsection{A first-order ETD mr-SAV scheme}
For a single time step $t \in (t_n, t_{n+1}]$, we first present a first-order approximation to \eqref{eqn:mr_sav} by treating the nonlinear term $B(\omega, \omega)$ explicitly:
\begin{equation}\label{eqn:1st_msSAV_approx}
    \left\{
    \begin{aligned}
    &\frac{\partial \omega}{\partial t} + \nu \mathcal{L} \omega + q^{n+1} B(\omega^{n}, \omega^{n}) = f^{n},\\
    &\frac{d q}{dt} + \gamma q  = \langle B(\omega^{n}, \omega^{n}), \omega^{n+1}\rangle + \gamma.
    \end{aligned}
    \right.
\end{equation}
This leads to the following first-order ETD mr-SAV \textbf{(ETD-mr-SAV)} scheme, namely,
\begin{subequations}\label{eqn:mrSAV_ETD1}
    \begin{align}
    &\omega^{n+1} = \varphi_0(\tau_{n+1} \nu \mathcal{L})\omega^{n} + \tau_{n+1} \varphi_1(\tau_{n+1} \nu \mathcal{L})(f^{n} - q^{n+1} B(\omega^{n},\omega^{n})), \label{eqn:mrSAV_ETD1_u}\\
    &q^{n+1} = \varphi_0(\tau_{n+1} \gamma) q^{n} + \tau_{n+1} \varphi_1(\tau_{n+1} \gamma) ( \langle B(\omega^{n}, \omega^{n}), \omega^{n+1}\rangle + \gamma), \label{eqn:mrSAV_ETD1_q}
    \end{align}
\end{subequations}
where $\varphi_0(\cdot)$ and $\varphi_1(\cdot)$ are defined as
\begin{equation}\label{eqn:exp_relat}
    \varphi_0(z) = \mathrm{e}^{-z},\quad \varphi_1(z) = \frac{1 - \mathrm{e}^{-z}}{z}.
\end{equation}
Following classical SAV technique, the above seemingly coupled scheme can be solved efficiently. First, substitute the expression for $\omega^{n+1}$ given in \eqref{eqn:mrSAV_ETD1_u} into \eqref{eqn:mrSAV_ETD1_q}. This yield an explicit expression for $q^{n+1}$, which is presented as follows:
\begin{equation}
    q^{n+1} = Y^{n+1}/W^{n+1},
\end{equation}
where
\begin{equation*}
    \begin{aligned}
    &W^{n+1} = 1 + \tau_{n+1} \varphi_1(\tau_{n+1} \gamma)  \langle B(\omega^{n}, \omega^{n}), v_2^{n+1} \rangle,\\
    &Y^{n+1} = \varphi_0(\tau_{n+1} \gamma) q^{n} + \tau_{n+1} \varphi_1(\tau_{n+1} \gamma) ( \langle B(\omega^{n}, \omega^{n}), v_1^{n+1} \rangle + \gamma),
    \end{aligned}
\end{equation*}
and $v^{n+1}_j, j=1,2$ are defined by
\begin{equation*}
    \begin{aligned}
    & v^{n+1}_ 1 = \varphi_0(\tau_{n+1} \nu \mathcal{L})\omega^{n} + \tau_{n+1} \varphi_1(\tau_{n+1} \nu \mathcal{L})f^{n},\\
    & v^{n+1}_2 = \tau_{n+1}\varphi_1(\tau_{n+1} \nu \mathcal{L}) B(\omega^{n},\omega^{n}).
    \end{aligned}
\end{equation*}
Then substitute $q^{n+1}$ back into equation \eqref{eqn:mrSAV_ETD1_u}, and $\omega^{n+1}$ can be directly solved as
\begin{equation}
    \omega^{n+1} = v_1^{n+1} - q^{n+1} v_2^{n+1}.
\end{equation}
This scheme is computationally efficient since only two Stokes solves are needed at each time step. Moreover, since $\varphi_1(\tau\mathcal{L})$ is a self-adjoint positive operator for all $\tau>0$, the scheme is always uniquely solvable as $\langle B(\omega^{n}, \omega^{n}), v_2^{n+1}) \rangle\ge 0,\ \forall \omega^n\in V, \tau_{n+1}>0$.

This first-order scheme can be utilized to start the two-step second-order schemes to be introduced next. This approach can also be employed in adaptive time stepping strategies, where the numerical solutions of the first- and second-order schemes are compared to determine the optimal time step size; additionally, by replacing the nonlinear term with $B(\tilde{\omega}^{n+\frac12}, \tilde{\omega}^{n+\frac12})$, it can form an approximately embedded adaptive scheme with the proposed format below.
}

\subsection{A second-order ETD mr-SAV multistep (ETD-mr-SAV-MS2-L) scheme}
In order to obtain a second-order accuracy approximation $\omega^{n+1}$ and $q^{n+1}$, we approximate the nonlinear terms and external force terms in Eq. \eqref{eqn:mr_sav} with a second-order midpoint extrapolations. 
For $t \in (t_n, t_{n+1}]$, the resulting approximation system is given by
\begin{subequations}\label{eqn:2nd_mrSAV}
    \begin{align}
    &\frac{\partial \omega}{\partial t} + \nu \mathcal{L} \omega + q(t) B(\tilde{\omega}^{n+\frac{1}{2}}, \tilde{\omega}^{n +\frac{1}{2}}) = f^{n+\frac{1}{2}}, \label{eqn:2nd_mrSAV_u}\\
    &\frac{d q}{dt} + \gamma q  = \langle B(\tilde{\omega}^{n+\frac{1}{2}}, \tilde{\omega}^{n+\frac{1}{2}}), \omega \rangle + \gamma. \label{eqn:2nd_mrSAV_q}
    \end{align}
\end{subequations}
The nonlinear term could also be approximated by $\tilde{B}^{n+\frac12}:=  \frac{\tau_{n+1} + 2\tau_{n}}{2\tau_{n}}B(\omega^{n},\omega^n) - \frac{\tau_{n+1}}{2\tau_{n}} B(\omega^{n-1},\omega^{n-1})$.

By the variation of constants formula, equation \eqref{eqn:2nd_mrSAV_u} implies that
\begin{equation}\label{eqn:2nd_etd_mrSAV_ms_u} 
    \omega(t_n+\tau) = \varphi_0(\tau \nu \mathcal{L})\omega^{n} + \tau \varphi_1(\tau\nu \mathcal{L})f^{n+\frac{1}{2}} -  \int_{0}^{\tau} e^{-(\tau-s)\nu \mathcal{L}} q(s+t_n) B(\tilde{\omega}^{n+\frac{1}{2}},\tilde{\omega}^{n+\frac{1}{2}}) ds,
\end{equation}
where $\tau := t - t_n$, and the functions $\varphi_0(\cdot)$ and $\varphi_1(\cdot)$ are defined as
\begin{equation}\label{eqn:exp_relat}
    \varphi_0(z) = \mathrm{e}^{-z},\quad \varphi_1(z) = \frac{1 - \mathrm{e}^{-z}}{z}.
\end{equation}

We recover the following classical {\bf ETD-MS2} scheme \cite{zhang1987schema}
\begin{equation}\label{eqn:ETD-MS2} 
    \omega(t_n+\tau) = \varphi_0(\tau \nu \mathcal{L})\omega^{n} + \tau \varphi_1(\tau\nu \mathcal{L})f^{n+\frac{1}{2}} -  \tau\varphi_1(\tau\nu\mathcal{L}) B(\tilde{\omega}^{n+\frac{1}{2}},\tilde{\omega}^{n+\frac{1}{2}}),
\end{equation}
if we impose $q(t)\equiv 1$ in \eqref{eqn:2nd_etd_mrSAV_ms_u}. Next, we substitute this expression for $\omega(t)$ into \eqref{eqn:2nd_mrSAV_q} and deduce
\begin{equation}\label{eqn:2nd_etd_mrSAV_ms_q} 
\begin{aligned}
    &\frac{d q(\tau+t_n)}{d\tau} + \gamma q(\tau + t_n) +  \langle B(\tilde{\omega}^{n+\frac{1}{2}},\tilde{\omega}^{n+\frac{1}{2}}), \int_{0}^{\tau} e^{-(\tau-s)\nu \mathcal{L}} q(s+t_n) B(\tilde{\omega}^{n+\frac{1}{2}},\tilde{\omega}^{n+\frac{1}{2}}) ds \rangle   \\
    =& \langle B(\tilde{\omega}^{n+\frac{1}{2}}, \tilde{\omega}^{n+\frac{1}{2}}), \varphi_0(\tau \nu \mathcal{L})\omega^{n} + \tau \varphi_1(\tau\nu \mathcal{L})f^{n+\frac{1}{2}} \rangle + \gamma.
\end{aligned}
\end{equation}
This linear differential-integral equation of convolution type with $\mathcal{L}$ generating a $C^0$-semigroup on $L^2$ is uniquely solvable \cite{Brunner2017}, provided $B(\tilde{\omega}^{n+\frac{1}{2}},\tilde{\omega}^{n+\frac{1}{2}})\in L^2$.
 {%\color{red}
 Moreover, $q$ is analytic in $t$ if $f^{n+\frac12}$ is Gevrey class regular}  \cite{Brunner2017, constantin1988navier}.
%\footnote{This is guaranteed as long as $\omega^n, \omega^{n-1}\in H^1$}.
\ignore{\begin{equation}\label{eqn:2nd_etd_mrSAV_ms_q} 
\begin{aligned}
    &\frac{d q(t)}{dt} + \gamma q(t) +  \langle B(\tilde{\omega}^{n+\frac{1}{2}},\tilde{\omega}^{n+\frac{1}{2}}), \int_{0}^{\tau}  e^{-(\tau-s)\nu \mathcal{L}} q(s+t_n) B(\tilde{\omega}^{n+\frac{1}{2}},\tilde{\omega}^{n+\frac{1}{2}}) ds \rangle   \\
    =& \langle B(\tilde{\omega}^{n+\frac{1}{2}}, \tilde{\omega}^{n+\frac{1}{2}}), \varphi_0(\tau \nu \mathcal{L})\omega^{n} + \tau \varphi_1(\tau\nu \mathcal{L})f^{n+\frac{1}{2}} \rangle + \gamma.
\end{aligned}
\end{equation}}
Since the integral in \eqref{eqn:2nd_etd_mrSAV_ms_q} is in the form of a convolution, it can be solved via the Laplace transformation. We thus derive the following expression for the Laplace transform of $q$, denoted $\hat{q}(z)$:
\begin{equation}\label{eqn:q-Laplace}
\begin{aligned}
     &(z  + \gamma + \langle B(\tilde{\omega}^{n+\frac{1}{2}},\tilde{\omega}^{n+\frac{1}{2}}), (z + \nu\mathcal{L})^{-1} B(\tilde{\omega}^{n+\frac{1}{2}},\tilde{\omega}^{n+\frac{1}{2}}) \rangle) \hat{q}(z)   \\
    =& q(t_n) + \frac{1}{z} \gamma + \langle (z + \nu\mathcal{L})^{-1}B(\tilde{\omega}^{n+\frac{1}{2}}, \tilde{\omega}^{n+\frac{1}{2}}),  \omega^{n} + \frac{f^{n+\frac{1}{2}}}{z} \rangle.
\end{aligned}
\end{equation}
\ignore{\begin{equation}\label{eqn:q-Laplace}
\begin{aligned}
     &(z  + \gamma + \langle B(\tilde{\omega}^{n+\frac{1}{2}},\tilde{\omega}^{n+\frac{1}{2}}), (z - \tau \mathcal{L})^{-1} B(\tilde{\omega}^{n+\frac{1}{2}},\tilde{\omega}^{n+\frac{1}{2}}) \rangle) \hat{q}(z)   \\
    =& q(t_n) +  \frac{1}{z} \left(\langle B(\tilde{\omega}^{n+\frac{1}{2}}, \tilde{\omega}^{n+\frac{1}{2}}), \varphi_0(\tau \nu \mathcal{L})\omega^{n} +  \tau \varphi_1(\tau\nu \mathcal{L})f^{n+\frac{1}{2}} \rangle + \gamma\right).
\end{aligned}
\end{equation}}
We can express $\hat{q}$, the Laplace transform of $q$, concisely as:
%The Laplace transform equation can then be written concisely as:
\begin{equation}\label{eqn:q-hat-explicit}
\hat{q}(z) = \frac{1}{\Gamma(z)} \left[ q(t_n) + \frac{\gamma}{z} + \mathcal{G}(z) \right],
\end{equation}
\noindent where the functions are explicitly defined as:
\begin{equation}\label{eqn:q_hat_formula}
\begin{aligned}
&\Gamma(z) = z + \gamma + \langle B(\tilde{\omega}^{n+\frac{1}{2}},\tilde{\omega}^{n+\frac{1}{2}}), (z + \nu\mathcal{L})^{-1} B(\tilde{\omega}^{n+\frac{1}{2}},\tilde{\omega}^{n+\frac{1}{2}}) \rangle, \\
&\mathcal{G}(z) = \langle (z + \nu\mathcal{L})^{-1} B(\tilde{\omega}^{n+\frac{1}{2}}, \tilde{\omega}^{n+\frac{1}{2}}),  \omega^{n} +\frac{f^{n+\frac{1}{2}}}{z} \rangle.
\end{aligned}
\end{equation}

%We can recover $q$ via inverse-Laplace transform with the help of Talbot's algorithm \cite{trefethen2006talbot} by choosing an appropriate contour.

\ignore{
After rearranging, it can be observed from \eqref{eqn:q_hat_formula} that the singularities of $\hat{q}(z)$, besides the zeros of $\Gamma(z)$, include the poles of $\mathcal{G}(z)$ and $\mathcal{H}(z)$ at $z = -\nu\lambda_k$, as well as the first-order pole of $1/z$ at $z=0$. All these poles lie on the non-positive real axis of the complex plane. Observe that the term $B(\tilde{\omega}^{n+\frac{1}{2}}, \tilde{\omega}^{n+\frac{1}{2}})$ is common to $\Gamma(z)$, $\mathcal{G}(z)$, and $\mathcal{H}(z)$. Since it is independent of $z$, it requires only a single computation, which significantly reduces the overall computational cost.

To gain further insight into the structure of $\hat{q}(z)$, we then examine the spectral representation of the operator term in $\Gamma(z)$. Let $\{\bm \phi_k\}$ be the complete orthonormal set of eigenfunctions of $\mathcal{L}$ with corresponding eigenvalues $\lambda_k \geq 0$ (since $-\Delta$ is positive semi-definite), and denote $B(\tilde{\omega}^{n+\frac{1}{2}},\tilde{\omega}^{n+\frac{1}{2}})$ by $\tilde{b}$. Then we can expand:
\[
\bm \tilde{b}= \sum_k b_k \bm \phi_k, \quad \text{where } b_k = \langle \bm \tilde{b}, \bm \phi_k \rangle.
\]
Therefore, $\Gamma(z)$ can be expressed as:
\[
\Gamma(z) = z + \gamma + \sum_k \frac{|b_k|^2}{z + \nu\lambda_k}.
\]

This representation reveals that $\Gamma(z)$ possesses simple poles at $z = -\nu\lambda_k$, which coincide with the pole locations of $\mathcal{G}(z)$ and $\mathcal{H}(z)$. Because $\lambda_k \geq 0$, all these poles lie on the left real axis or at the origin. Consequently, the singularities of $\hat{q}(z)$ in the inverse Laplace transform consist of both these poles and the zeros of $\Gamma(z)$. To analyze the zeros of $\Gamma(z)$, we set $z = z_1 + i z_2$ (with $z_1, z_2 \in \mathbb{R}$) and separate the real and imaginary parts of $\Gamma(z)=0$, which yields the system:
\begin{subequations}
\begin{align}
    \text{Re}[\Gamma(z)] &= z_1 + \gamma + \sum_k \frac{(z_1 + \nu\lambda_k)|b_k|^2}{(z_1 + \nu\lambda_k)^2 + z_2^2} = 0, \label{eq:real-part} \\
\text{Im}[\Gamma(z)] &= z_2 \left[ 1 - \sum_k \frac{|b_k|^2}{(z_1 + \nu\lambda_k)^2 + z_2^2} \right] = 0. \label{eq:imag-part}
\end{align}
\end{subequations}
From \eqref{eq:real-part}, since $\gamma > 0$ and each term in the sum is non-negative when $z_1 \ge -\nu\lambda_k$, we deduce that $z_1$ must be negative. Hence all zeros of $\Gamma(z)$ lie in the left half-plane.

Equation \eqref{eq:imag-part} is satisfied either if $z_2 = 0$ or if 
\begin{equation}\label{eq:imag-cond}
\sum_k \frac{|b_k|^2}{(z_1 + \nu\lambda_k)^2 + z_2^2} = 1.
\end{equation}
\ignore{When \eqref{eq:imag-cond} holds, for any particular $k$, giving the necessary condition
\[
\frac{|b_k|^2}{(z_1 + \nu\lambda_k)^2 + z_2^2} \le 1.
\]
This implies that each complex zero $z = z_1 + i z_2$ must lie outside (or on) the circle centered at $(-\nu\lambda_k, 0)$ with radius $|b_k|$ in the complex plane; i.e.,
\[
(z_1 + \nu\lambda_k)^2 + z_2^2 \ge |b_k|^2.
\]

Combining this geometric constraint with the earlier conclusion from the real-part equation \eqref{eq:real-part} that $z_1 < 0$, we can delineate the region containing the zeros of $\Gamma(z)$. 
Consequently, the zeros are confined to the intersection of the left half-plane ($\operatorname{Re}(z) < 0$) and the exterior of a family of circles centered at $(-\nu\lambda_k, 0)$ with radius $|b_k|$. 
}
}
Notice that when $B(\tilde{\omega}^{n+\frac{1}{2}},\tilde{\omega}^{n+\frac{1}{2}})\in L^2$, the singularities of $\hat{q}$ are contained in a band around the negative real-axis. Hence, we can resort to Talbot's algorithm to recover $q$ for {%\color{red}
a few} $t$ values within the interval $(t_n, t_{n+1}]$ so that the integral in \eqref{eqn:2nd_etd_mrSAV_ms_u} can be approximated %{\cancel{up to order $\tau^3_{n+1}$}}
%\color{red}
{{up to machine precision with a quadrature rule using the values of $q$ at these points assuming that the time-step is not big, say $\tau\le 10^{-2}$.}}
%{\cancel{ for $t=t_{n+1}$ since $q$ is analytic in time.}} 
The Talbot's algorithm converges exponentially, and hence only a handful points are needed for the error in computing the $q$ values to reach machine precision for a well-chosen contour and points \cite{trefethen2006talbot}.  Ten points are untilized in our computation. Subsequently, the solution $\omega^{n+1}$ at the next time step can be computed from equation \eqref{eqn:2nd_etd_mrSAV_ms_u} {%\color{red} 
to machine precision essentially.}
%{\cancel{with local truncation error of the order of $\tau^3_{n+1}$.}} 
{%\color{red} 
We also point out that the computation of $\hat{q}(z)$ for different $z$ values uses the same nonlinear term $B(\tilde{\omega}^{n+\frac{1}{2}},\tilde{\omega}^{n+\frac{1}{2}})$. Therefore, only two FFT are required to compute the $\hat{q}$ values. Furthermore, the computation of $q(t)$ for different values of $t$ uses the same $\hat{q}(z)$. Henceforth, the overhead of computing $q(t)$ for different $t$ beyond the first one is very small.}

The scheme proposed herein is referred to as the second-order ETD mr-SAV multistep \textbf{(ETD-mr-SAV-MS2-L)} scheme, where the suffix L emphasizes the role of Laplace transform, and to distinguish from a similar second-order ETD-mr-SAV-MS2 scheme proposed in a recent work \cite{WangWangZhang2026} where the scalar auxiliary variable solves a nonlinear algebraic equation. 

We next comment on the computational complexity of the three schemes. The ETD-MS2, ETD-mr-SAV-MS2-L, and ETD-mr-SAV-MS2 schemes all require the evaluation of the same explicitly treated nonlinear term. In a Fourier pseudo-spectral implementation, this step has computational complexity of order $\mathcal{O}(N\log N)$, where $N$ denotes the total number of spatial degrees of freedom.

The main additional cost of the ETD-mr-SAV-MS2-L scheme comes from the use of the Laplace-transform representation and its numerical inversion. In particular, the Talbot-type quadrature used for the inverse Laplace transform requires a number of diagonal matrix-vector multiplications, as well as the corresponding vector inner products. Each such diagonal matrix-vector multiplication or inner product has cost $\mathcal{O}(N)$, which is asymptotically lower than the $\mathcal{O}(N\log N)$ cost of evaluating the nonlinear term. However, the numerical inversion typically involves on the order of two dozen such operations. Consequently, although these are lower-order operations from the asymptotic viewpoint, their cumulative contribution is not negligible for moderate values of $N$, as confirmed by the numerical results reported below.

Thus, compared with ETD-MS2 and ETD-mr-SAV-MS2, the ETD-mr-SAV-MS2-L scheme has a larger computational prefactor. This additional cost is the price paid for its stronger structural property: among the three schemes considered here, only ETD-mr-SAV-MS2-L simultaneously guarantees linearity, unique solvability for all parameter values, and uniform-in-time stability.

\ignore{
\begin{rem}
    The inverse Laplace transform at each time step, needed for the recovery of $q(t)$, is relatively computationally time-consuming. A more tractable second-order ETD mr-SAV multistep scheme is to approximate $q(s+t_n)$ in the integral term of \eqref{eqn:2nd_etd_mrSAV_ms_q} with an $r$-th order Lagrange interpolation using known information from the previous steps. This modification makes the scheme much simpler and shows better performance in our numerical experiments. We refer to this scheme as \textbf{(ETD-mr-SAV-ms2-r)}. Nevertheless, due to the introduction of interpolation, the rigorous verification of its stability is still unknown. 
\end{rem}
}

%\newpage
\subsection{Adaptive time-step ETD mr-SAV multistep scheme}
We now present an embedded adaptive time-stepping algorithm based on the long-time stable variable-step second-order scheme \eqref{eqn:2nd_mrSAV}. This embedded adaptive scheme enables automatic error control without significant computational overhead while maintaining long-time stability. See \cite{Fehlberg1969NASA, DormandPrince1980JCAM} for some historical notes on the development of embedded pairs, and 
\cite{decaria2021embedded} for a recent interesting 1--2 pair for the 2D Navier-Stokes equations (without the $L^\infty(0,\infty;L^2)$ bound).

First, in the implementation of the ETD-mr-SAV-ms2 scheme, we employ the $k$-points Gauss Lobatto quadrature rule to approximate the integral term arising in the solution of Equation \eqref{eqn:2nd_etd_mrSAV_ms_u}. 
Accordingly, equation \eqref{eqn:2nd_etd_mrSAV_ms_u} is solved using the formulation
\begin{equation}\label{eqn:etd_mrSAV_quad}
\begin{aligned}
\omega^{n+1} = & \varphi_0(\tau_{n+1} \nu \mathcal{L})\omega^{n} + \tau_{n+1} \varphi_1(\tau_{n+1}\nu \mathcal{L})f^{n+\frac{1}{2}} \\
&-\tau_{n+1} \sum_{i=1}^{k} m_i e^{-(\tau_{n+1}-s_i)\nu \mathcal{L}} q(s_i+t_n) B(\tilde{\omega}^{n+\frac{1}{2}},\tilde{\omega}^{n+\frac{1}{2}}),
\end{aligned}
\end{equation}
where $s_i$ is the quadrature points and $m_i$ is the corresponding weights.
To ensure that the solution formula achieves machine precision while maintaining computational efficiency, the \(6\)-point Gauss--Lobatto quadrature rule is employed in the subsequent numerical experiments. 

Following the classical SAV technique, the above second-order exponential time-differencing mean-reverting SAV two-step (ETD-mr-SAV-MS2-L) scheme can be solved efficiently. The procedure is as follows:
\begin{enumerate}
    \item Calculate the auxiliary variable $q(t_n+s)$ via Talbot's algorithm:
    \begin{equation}\label{eqn:step_1}
        q_i = q(t_n+s_i).
    \end{equation}
    \item Calculate the intermediate variables:
    \begin{equation}\label{eqn:step_2}
        \begin{aligned}
        & v^{n+1}_1 = \varphi_0(\tau_{n+1} \nu \mathcal{L})\omega^{n}
        + \tau_{n+1} \varphi_1(\tau_{n+1} \nu \mathcal{L})f^{n+\frac{1}{2}},\\
        & v^{n+1}_{2,i} = \tau_{n+1}e^{-(\tau_{n+1}-s_i)\nu \mathcal{L}} q_i
        B(\tilde{\omega}^{n+\frac{1}{2}}, \tilde{\omega}^{n+\frac{1}{2}}),\quad i = 1,\dots,k.
        \end{aligned}
    \end{equation}
    % \item Compute the coefficients:
    % \begin{equation}\label{eqn:step_2}
    %     A^{n+1} = \langle \omega_1^{n+1}, \omega_2^{n+1}\rangle,\quad
    %     B^{n+1} = \Vert \omega_2^{n+1} \Vert^2,\quad
    %     C^{n+1} = \varphi_0(\tau_{n+1} \gamma)\, r^n.
    % \end{equation}
    % \item Determine the auxiliary variable $r^{n+1}$ by finding the smallest root of the cubic polynomial
    % \begin{equation}\label{eqn:cubic}
    %    g(r)=B^{n+1} r^3 - B^{n+1} r^2 + (1 + A^{n+1} - B^{n+1}) r -(A^{n+1} - B^{n+1} + C^{n+1}).
    % \end{equation}
    \item Obtain $\omega^{n+1}$ from
    \begin{equation}\label{eqn:step_3}
         \omega^{n+1} = v_1^{n+1} - \sum_{i=1}^{k} m_i v_{2,i}^{n+1}.
    \end{equation}
\end{enumerate}
In all these calculations, we pre-compute $B(\tilde{\omega}^{n+\frac{1}{2}},\tilde{\omega}^{n+\frac{1}{2}})$ so that the nonlinear term (and the associated FFT) are evaluated only once.

Next, when employing an adaptive algorithm, the local error must be evaluated using a first-order accurate numerical scheme, for which a first-order one-level ETD-mr-SAV scheme can be utilized. To avoid incurring additional computational overhead, we adopt the embedded strategy to construct the first-order scheme.

To formulate a first-order embedded numerical scheme, we apply the left rectangular quadrature rule to solve equation \eqref{eqn:2nd_etd_mrSAV_ms_u}, yielding the following {\bf embedded first-order scheme}
\begin{equation}\label{eqn:step_4}
\begin{aligned}
   \omega^{n+1}_{(1)} = & \varphi_0(\tau_{n+1} \nu \mathcal{L})\omega^{n} + \tau_{n+1} \varphi_1(\tau_{n+1}\nu \mathcal{L})f^{n+\frac{1}{2}} -\tau_{n+1} q(t_{n+1}) B(\tilde{\omega}^{n+\frac{1}{2}},\tilde{\omega}^{n+\frac{1}{2}}) \\
\end{aligned}
\end{equation}
It is straightforward to verify that ${\omega}^{n + 1}_{(1)}$ attains only first-order accuracy. This scheme is solved in the same manner as \eqref{eqn:2nd_etd_mrSAV_ms_u} and uses the same intermediate variables that arise in \eqref{eqn:etd_mrSAV_quad}, incurring no additional computational cost.

% \begin{subequations}\label{eqn:mrSAV_ETD1}
%     \begin{align}
%     &\bar{\omega}^{n+1} = \varphi_0(\tau_{n+1} \nu \mathcal{L})\omega^{n} + \tau_{n+1} \varphi_1(\tau_{n+1} \nu \mathcal{L})(f^{n} - q^{n+1} B(\omega^{n},\omega^{n})), \label{eqn:mrSAV_ETD1_u}\\
%     &q^{n+1} = \varphi_0(\tau_{n+1} \gamma) q^{n} + \tau_{n+1} \varphi_1(\tau_{n+1} \gamma) ( \langle B(\omega^{n}, \omega^{n}), \bar{\omega}^{n+1}\rangle + \gamma), \label{eqn:mrSAV_ETD1_q}
%     \end{align}
% \end{subequations}

% We therefore refer to \eqref{eqn:mrSAV_ETDMS1} as the first-order exponential time-differencing mean-reverting SAV (\textbf{ETD-mr-SAV-MS2-1o}) scheme.

% Owing to its structural similarity with the ETD-mr-SAV-MS2 scheme, the intermediate values employed in solving \eqref{eqn:2nd_etd_mrSAV_ms_u} can be reused for the computation of \textbf{ETD-mr-SAV-MS2-1o}. In particular, the variable \(\bar{\omega}^{n+1}\) can be obtained via the expression
% \begin{equation}\label{eqn:step_4}
%     \bar{\omega}^{n+1} = \bm v_1^{n+1} - \bm v_{2,3}^{n+1}.
% \end{equation}

Combining the first- and second-order ETD-mr-SAV schemes above, we construct an embedded adaptive algorithm. Following \cite{hairer1993solving}, the adaptive strategy is summarized in {\bf Algorithm~\ref{alg:adaptive}} with the time-step update factor function
\begin{equation}\label{update_function}
A_{dp}(e_{\omega}, e_q, \rho, s)
= \rho \left(\min\left\{\frac{\text{tol}}{e_{\omega}},\ \frac{\text{tol}}{e_q}\right\}\right)^{\frac{1}{s}}.
\end{equation}
Here, $\rho$ denotes a safety factor, and $\text{tol}$ represents the prescribed/desired tolerances for the vorticity variable $\omega$ and the scalar auxiliary variable $q$, respectively. These parameters depend on the specific problem under consideration. We refer to this adaptive time-stepping method as the \textbf{ETD-mr-SAV-MS12-L} scheme.

% Here, $\rho$ is a safety factor, and $\text{tol}_{\omega}$ and $\text{tol}_q$ are prescribed tolerances for the vorticity variable $\omega$ and the scalar auxiliary variable $q$, respectively. These parameters depend on the specific problem under consideration. We refer to this adaptive time-stepping method as the \textbf{ETD-mr-SAV-MS12} scheme.

\begin{algorithm}[ht]
\footnotesize
\caption{ETD-mr-SAV-MS12-L scheme}
\label{alg:adaptive}
\begin{algorithmic}
\State \textbf{Given:} $\omega^{n}, \omega^{n-1}$, $q^{n}$, $\tau_{n+1}$.
\State \textbf{Step 1.} Compute $q_i$, $i = 1,\dots,k$, via the Talbot's algorithm.
\State \textbf{Step 2.} Compute $\omega_1^{n+1}$, $\omega_{2,i}^{n+1}$, $i=1, \dots, k$, via \eqref{eqn:step_2}. 
\State \textbf{Step 3.} Compute the second-order approximation $\omega_{(2)}^{n+1}$ and the first-order approximation $\omega_{(1)}^{n+1}$ via \eqref{eqn:step_3} and \eqref{eqn:step_4}, respectively.
\State \textbf{Step 4.} Compute
$e_{\omega}^{n+1} = \frac{\Vert \omega_{(2)}^{n+1} - \omega_{(1)}^{n+1} \Vert}{\ \Vert \omega_{(2)}^{n+1} \Vert}$
and $e_q^{n+1}=|q(t_{n+1})-1|$.
\If{$\max\{e_{\omega}^{n+1},e_{q}^{n+1} \} \le \text{tol}$}
    \State \textbf{Step 5.} Set $\tau_{n+2} \leftarrow max\{\tau_{\min},\ \min\{A_{dp}(e_{\omega}^{n+1}, e_q^{n+1},\rho,2)\tau_{n+1},\ \tau_{\max}\}\}$, $\omega^{n+1} = \omega_{(2)}^{n+1}$ and  $q^{n+1} = q(t_{n+1})$.
\Else
    \State \textbf{Step 6.} Reject $\omega_{(2)}^{n+1}, \tau_{n+1}$. Reset
    $\tau_{n+1} \leftarrow \max\{\tau_{\min},\ \min\{A_{dp}(e_{\omega}^{n+1}, e_q^{n+1}, \rho, 2) \tau_{n+1},\ \tau_{\max}\}\}$.
    \State \textbf{Step 7.} Go to \textbf{Step 1}.
\EndIf
\end{algorithmic}
\end{algorithm}

\ignore{
\begin{rem}
In this adaptive time-stepping setting, one may instead use the first-order ETD-mr-SAV scheme \eqref{eqn:mrSAV_ETD1} to obtain $\bar{\omega}^{n+1}$. However, to construct an embedded adaptive algorithm without additional computational cost, we choose the first-order two-step ETD-mr-SAV scheme \eqref{eqn:step_4} so that the intermediate computations can be shared with the ETD-mr-SAV-MS2 scheme.
\end{rem}
}

%\newpage
\subsection{Adaptive VSVO ETD mr-SAV scheme}

In this section, we propose an adaptive variable-stepsize, variable-order (VSVO) time-stepping scheme based on the variable-step methods presented earlier. This scheme enables the automatic adjustment of both the temporal order and step size depending on the local truncation error. %, while maintaining long-term stability.

Following the work of \cite{li2020adaptive,decaria2021embedded}, we combine the embedded first- and second-order ETD-mr-SAV schemes detailed above together with a variable-step BDF2 scheme to come up with our VSVO strategy outlined in {\bf Algorithm~\ref{alg:VSVO}}.

Here, we adopt the same parameter selection as \cite{decaria2021embedded}. The aforementioned algorithm contains estimates of the local error within Step 4. Specifically, $EST_1$ provides an estimate of the local error associated with the first-order approximation $\omega^{n+1}_{(1)}$. For the second-order error estimator $EST_2$, it is derived by substituting the solution obtained from the ETD-mr-SAV-MS2-L scheme into the residual of the variable-stepsize BDF2 formulation, as given by
\begin{equation*}
\begin{aligned}
\text{RES}_{\text{BDF2}}(\omega_{(2)}^{n+1})=&\frac{(1 + 2 r_{n+1})\omega_{(2)}^{n+1}-(1 + r_{n+1})^2\omega^{n}+r_{n+1}^2\omega^{n-1}}{(1 + r_{n+1})\tau_{n}}\\
&+\nu\mathcal{L}\omega_{(2)}^{n+1}+B(\omega_{(2)}^{n+1},\omega_{(2)}^{n+1})-f^{n+1},
\end{aligned}
\end{equation*}
where $r_{n+1}=\tau_{n+1}/\tau_{n}$ represents the reciprocal stepsize ratio. The differentiation and nonlinear term can be calculated efficiently in the Fourier space with the aid of FFT. Alternatively, we could also estimate $EST_2$ via a third-order interpolation formula \cite{decaria2021embedded}, expressed as
\begin{equation*}
\begin{aligned}
\text{RES}_{\text{Interp}}(\omega_{(2)}^{n+1}) =& \omega_{(2)}^{n+1}-\frac{(1 + r_{n+1})\bigl(1 + r_{n}(1 + r_{n+1})\bigr)}{1 + r_{n}}\omega^{n}+r_{n+1}\bigl(1 + r_{n}(1 + r_{n+1})\bigr)\omega^{n-1}\\
&-\frac{r_n^2r_{n+1}(1 + r_{n+1})}{1 + r_n}\omega^{n-2}.
\end{aligned}
\end{equation*}
This modification is also highly efficient, but converts the scheme into a three-step method, thereby increasing its memory complexity. We will use the BDF2 approach in this work.

\begin{algorithm}[htbp]
\footnotesize
\caption{VSVO scheme}
\label{alg:VSVO}
\begin{algorithmic}
\State \textbf{Given:} $\omega^{n}, \omega^{n-1}$, $q^{n}$, $\tau_{n+1}$.
\State \textbf{Step 1.} Compute $q_i$, $i=1,\cdots,k$ via the Talbot's algorithm.
\State \textbf{Step 2.} Compute $\omega_1^{n+1}$, $\omega_{2,i}^{n+1}$ ($i=1,\cdots, k$) via \eqref{eqn:step_2}. 
\State \textbf{Step 3.} Compute the second-order approximation $\omega_{(2)}^{n+1}$ and the first-order approximation $\omega_{(1)}^{n+1}$ via \eqref{eqn:step_3} and \eqref{eqn:step_4}, respectively.
\State \textbf{Step 4.} Compute the local-in-time errors in $\omega_{(1)}^{n+1}$ and $\omega_{(2)}^{n+1}$ using the following definitions:
\begin{equation*}
    \text{EST}_1 = \omega_{(2)}^{n+1} - \omega_{(1)}^{n+1},\quad \text{EST}_2 = \text{RES}_\text{BDF2}(\omega_{(2)}^{n+1})
\end{equation*}
    
\State \textbf{Step 5.} Compute \(e_{\omega,1}^{n+1} = \frac{\Vert EST_1 \Vert}{\Vert \omega_{(2)}^{n+1} \Vert}\), \(e_{\omega,2}^{n+1} = \frac{\Vert EST_2 \Vert}{\Vert \omega_{(2)}^{n+1} \Vert}\), and \(e_q^{n+1} = |q_k -1|\).
\If{$e_q^{n+1} \leq \text{tol}$ and $\min\{\Vert e_{\omega,1}^{n+1} \Vert, \Vert e_{\omega,2}^{n+1} \Vert \} \le \text{tol}$}
    \State \textbf{Step 6.} Compute 
    \begin{equation*}
        \tau_{n+2}^{(1)}= A_{dp}(e_{\omega,1}^{n+1}, e_q^{n+1}, 0.9,2)\tau_{n+1} ,\quad \tau_{n+2}^{(2)} = A_{dp}(e_{\omega,2}^{n+1}, e_q^{n+1}, 0.9,3)\tau_{n+1}
    \end{equation*}
    set
    \begin{equation*}
        i = \underset{i=\{1,2\}}{\arg\max}\ \tau_{n+2}^{(i)},\quad \tau_{n+2} = \max\{\tau_{\min}, \min\{ \tau_{n+2}^{(i)}, \tau_{\max} \}\},\quad \omega^{n+1} = \omega_{(i)}^{n+1},\quad q^{n+1} = q(t_{n+1})
    \end{equation*}
\Else
    \State \textbf{Step 7.} Set
    \begin{equation*}
        \tau_{n+1}^{(1)}= A_{dp}(e_{\omega,1}^{n+1}, e_q^{n+1}, 0.7,2)\tau_{n+1} ,\quad \tau_{n+1}^{(2)} = A_{dp}(e_{\omega,2}^{n+1}, e_q^{n+1}, 0.7,3)\tau_{n+1}
    \end{equation*}
    Set 
    \begin{equation*}
        i = \underset{i=\{1,2\}}{\arg\max}\ \tau_{n+1}^{(i)},\quad \tau_{n+1} = \max\{\tau_{\min}, \min\{ \tau_{n+1}^{(i)}, \tau_{\max} \}\}
    \end{equation*}
    \State {\textbf{Step 8.}} Go to \textbf{Step 1}.
\EndIf

\end{algorithmic}
\end{algorithm}

\section{Long-time stability of the ETD-mr-SAV schemes}\label{sec:4}

The purpose of this section is to derive a uniform-in-time enstrophy bound for the ETD-mr-SAV schemes proposed in the previous section. More precisely, we establish $L^\infty(0,\infty;L^2)$ stability, together with a corresponding cumulative dissipation estimate, under uniformly bounded forcing.

\medskip
We first recall several elementary properties of the exponential-related functions introduced in \eqref{eqn:exp_relat}.

\begin{lemma}\label{lem:phi_prop}
The functions $\varphi_i$ defined in \eqref{eqn:exp_relat} satisfy:
\begin{itemize}
    \item[(1)] $\varphi_i(z)$ is decreasing for $i=0,1$.
    \item[(2)] $z\varphi_1(z)+\varphi_0(z)=1$.
    \item[(3)] $0\le \varphi_0(z)\le 1$, $0\le \varphi_1(z)\le 1$, and $1+\frac{z}{2}\le (\varphi_1(z))^{-1}\le 1+z,\qquad \forall z\ge 0$.
    \item[(4)] $(\varphi_1(z))^{-1}\ge z$, \qquad $\forall z\in \mathbb{R}^+$.
\end{itemize}
\end{lemma}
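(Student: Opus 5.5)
All four items follow from elementary calculus once the two functions are written in integral form. For $z\ge 0$ we use
\[
\varphi_0(z)=e^{-z},\qquad \varphi_1(z)=\int_0^1 e^{-sz}\,ds .
\]
The second identity holds for $z>0$ by direct integration, and at $z=0$ it gives the continuous extension $\varphi_1(0)=1$.

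\textbf{Item (1).} For $\varphi_0$ this is immediate. For $\varphi_1$ we differentiate under the integral sign:
\[
\varphi_1'(z)=-\int_0^1 s e^{-sz}\,ds<0 .
\]
This argument is valid on all of $\mathbb{R}$, not only for $z\ge0$.

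\textbf{Item (2).} This is an algebraic identity: $z\varphi_1(z)=1-e^{-z}$, so adding $\varphi_0(z)=e^{-z}$ gives $1$.

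\textbf{Item (3).} The bounds $0\le\varphi_0\le 1$ and $0\le\varphi_1\le 1$ on $z\ge 0$ follow from monotonicity, positivity of the integrands, and the values $\varphi_0(0)=\varphi_1(0)=1$.

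For the reciprocal bounds we write $(\varphi_1(z))^{-1}=z/(1-e^{-z})$ for $z>0$. At $z=0$ both inequalities hold with equality.
\begin{itemize}
\item The upper bound $z/(1-e^{-z})\le 1+z$ is equivalent to $z\le (1+z)(1-e^{-z})$, i.e. to $g(z):=1-(1+z)e^{-z}\ge0$. This holds because $g(0)=0$ and $g'(z)=ze^{-z}\ge0$.
\item The lower bound $1+\tfrac z2\le z/(1-e^{-z})$ is equivalent to $h(z):=z-(1+\tfrac z2)(1-e^{-z})\ge0$. We check that $h(0)=0$ and
\[
h'(z)=\tfrac12-\tfrac12 e^{-z}-\tfrac z2 e^{-z}=\tfrac12 g(z)\ge0
\]
by the previous bullet.
\end{itemize}
An alternative route is Jensen's inequality applied to $\varphi_1(z)=\int_0^1e^{-sz}ds\ge e^{-z/2}$, combined with $e^{z/2}\ge 1+z/2$. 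However, that gives the reciprocal bound in the wrong direction, so I will use the derivative argument above.

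\textbf{Item (4).} For $z>0$, $(\varphi_1(z))^{-1}=z/(1-e^{-z})\ge z$ because $0<1-e^{-z}<1$. This also follows directly from the lower bound in item (3).

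The only step requiring any care is the lower bound $1+z/2\le\varphi_1^{-1}$. The potential obstacle there is choosing the equivalent form whose derivative reduces to the function $g$ already handled in the upper bound.
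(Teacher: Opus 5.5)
Your proof is correct. The paper does not actually prove this lemma; it only ``recalls'' these as elementary properties of $\varphi_0,\varphi_1$ and gives no argument. Your calculus verification therefore supplies exactly what the paper omits:
\begin{itemize}
\item the integral representation $\varphi_1(z)=\int_0^1 e^{-sz}\,ds$ handles monotonicity and the bounds $0\le\varphi_1\le 1$;
\item both reciprocal bounds reduce to $g(z)=1-(1+z)e^{-z}\ge 0$, using $h'=\tfrac12 g$ for the lower bound.
\end{itemize}
The key computations $g'(z)=ze^{-z}$ and $h'(z)=\tfrac12 g(z)$ are right. Your remark that Jensen's inequality gives the reciprocal bound in the wrong direction is also accurate.

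One side remark is false, though it does not damage the proof. Item (4) does \emph{not} follow from the lower bound $1+\tfrac z2\le(\varphi_1(z))^{-1}$, because $1+\tfrac z2<z$ whenever $z>2$. For example, at $z=4$ that bound only gives $(\varphi_1(4))^{-1}\ge 3$, while you need $\ge 4$. Keep your direct argument instead: $0<1-e^{-z}<1$, or equivalently $z\varphi_1(z)=1-\varphi_0(z)\le 1$ from item (2).

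A minor point of consistency: the identity $\int_0^1 e^{-sz}\,ds=(1-e^{-z})/z$ holds for every $z\neq 0$, not only $z>0$. That is what justifies your claim that the monotonicity argument is valid on all of $\mathbb{R}$.
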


\medskip
Using the Fourier representation, let $\lambda_{\bm k}$ denote the eigenvalue of the Stokes operator $\mathcal{L}$ associated with the Fourier mode $\hat{h}_{\bm k} e^{\mathrm{i}\bm k\cdot \bm x}$. Then, for any $h\in V$ and $\tau\ge 0$,
\begin{equation}\label{eqn:phi_op_L}
\begin{aligned}
\varphi_1(\tau \nu \mathcal{L})h
&=(\tau \nu \mathcal{L})^{-1}\bigl(I-e^{-\tau \nu \mathcal{L}}\bigr)h
=\sum_{\bm k\in \mathbb{Z}^2\setminus\{0\}}
\frac{1-e^{-\tau \nu \lambda_{\bm k}}}{\tau \nu \lambda_{\bm k}}\,
\hat{h}_{\bm k}e^{\mathrm{i}\bm k\cdot \bm x},\\
(\varphi_1(\tau \nu \mathcal{L}))^{-1}h
&=\bigl(I-e^{-\tau \nu \mathcal{L}}\bigr)^{-1}(\tau \nu \mathcal{L})h
=\sum_{\bm k\in \mathbb{Z}^2\setminus\{0\}}
\frac{\tau \nu \lambda_{\bm k}}{1-e^{-\tau \nu \lambda_{\bm k}}}\,
\hat{h}_{\bm k}e^{\mathrm{i}\bm k\cdot \bm x}.
\end{aligned}
\end{equation}
It follows that $(\varphi_1(\tau \nu \mathcal{L}))^{-1}$ is positive definite and commutes with $\mathcal{L}$. Moreover, for any $h, g\in V$,
\begin{equation}\label{eqn:sbp_formulas}
\langle \mathcal{L} h, g\rangle
= \langle \mathcal{L}^{1/2} h,\mathcal{L}^{1/2} g\rangle,\qquad
\langle \varphi_1(\tau \nu \mathcal{L})h, g\rangle
= \langle (\varphi_1(\tau \nu \mathcal{L}))^{1/2} h,(\varphi_1(\tau \nu \mathcal{L}))^{1/2} g\rangle,
\end{equation}
where $\mathcal{L}^{1/2}$ and $(\varphi_1(\tau \nu \mathcal{L}))^{1/2}$ are defined spectrally. In particular,
\begin{equation}\label{eqn:inv_expansion}
\begin{aligned}
\mathcal{L}^{1/2}h
&=\sum_{\bm k\in \mathbb{Z}^2\setminus\{0\}}
\lambda_{\bm k}^{1/2}\hat{h}_{\bm k}e^{\mathrm{i}\bm k\cdot \bm x},\quad 
(\varphi_1(\tau \nu \mathcal{L}))^{1/2}h
=\sum_{\bm k\in \mathbb{Z}^2\setminus\{0\}}
\left(\frac{1-e^{-\tau \nu \lambda_{\bm k}}}{\tau \nu \lambda_{\bm k}}\right)^{1/2}
\hat{h}_{\bm k}e^{\mathrm{i}\bm k\cdot \bm x}.
\end{aligned}
\end{equation}
We also recall the Poincar\'e inequality
\begin{equation}\label{eqn:poincare}
\Vert h \Vert \leq \frac{1}{\sqrt{\lambda_1}} \Vert \mathcal{L}^{1/2}h \Vert,\qquad \forall h\in V,
\end{equation}
where $\lambda_1>0$ is the smallest eigenvalue of $\mathcal{L}$.

\medskip
Combining the spectral representation \eqref{eqn:inv_expansion} with Lemma~\ref{lem:phi_prop}, we obtain the following operator inequalities.

\begin{lemma}\label{lem:inv_opest}
For any $h\in V$ and $\tau\ge 0$, the following estimates hold:
\begin{equation}\label{eqn:op_est_1}
\Vert h \Vert^2
= \tau \nu \Vert (\varphi_1(\tau \nu \mathcal{L}))^{1/2}\mathcal{L}^{1/2} h \Vert^2
+ \Vert (\varphi_0(\tau \nu \mathcal{L}))^{1/2} h \Vert^2,
\end{equation}
\begin{equation}\label{eqn:op_est_2}
\Vert (\varphi_1(\tau \nu \mathcal{L}))^{1/2}h \Vert^2 \le \Vert h \Vert^2,
\end{equation}
\begin{equation}\label{eqn:inv_op_est_1}
\Vert h \Vert^2 + \frac{\tau \nu}{2}\Vert \mathcal{L}^{1/2} h \Vert^2
\le \Vert (\varphi_1(\tau \nu \mathcal{L}))^{-1/2} h \Vert^2
\le \Vert h \Vert^2 + \tau \nu \Vert \mathcal{L}^{1/2} h \Vert^2,
\end{equation}
and
\begin{equation}\label{eqn:inv_op_est_2}
\Vert (\varphi_1(\tau \nu \mathcal{L}))^{-1/2} h \Vert^2
\ge \tau \nu \Vert \mathcal{L}^{1/2} h \Vert^2.
\end{equation}
\end{lemma}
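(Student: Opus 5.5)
All four estimates are diagonal in the Fourier basis, so the plan is to reduce each to a scalar inequality for the multipliers $\varphi_0(z)$, $\varphi_1(z)$ at $z=\tau\nu\lambda_{\bm k}\ge 0$ and then invoke Lemma~\ref{lem:phi_prop}. For $h\in V$ write $h=\sum_{\bm k\neq 0}\hat h_{\bm k}e^{\mathrm{i}\bm k\cdot\bm x}$. By Parseval and the spectral definitions \eqref{eqn:phi_op_L}--\eqref{eqn:inv_expansion}, every norm appearing in the lemma has the form $|\Omega|\sum_{\bm k}m(\tau\nu\lambda_{\bm k})|\hat h_{\bm k}|^2$ for an explicit multiplier $m$. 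In particular,
\[
\tau\nu\Vert(\varphi_1(\tau\nu\mathcal L))^{1/2}\mathcal L^{1/2}h\Vert^2=|\Omega|\sum_{\bm k}z_{\bm k}\varphi_1(z_{\bm k})|\hat h_{\bm k}|^2,\qquad
\Vert(\varphi_1(\tau\nu\mathcal L))^{-1/2}h\Vert^2=|\Omega|\sum_{\bm k}\varphi_1(z_{\bm k})^{-1}|\hat h_{\bm k}|^2 ,
\]
where $z_{\bm k}:=\tau\nu\lambda_{\bm k}$. Since all weights $|\hat h_{\bm k}|^2$ are nonnegative, it suffices to compare the multipliers pointwise in $z\ge0$.

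With this reduction, each estimate follows from one item of Lemma~\ref{lem:phi_prop}.
\begin{itemize}
\item Identity \eqref{eqn:op_est_1} is the pointwise identity $z\varphi_1(z)+\varphi_0(z)=1$, which is Lemma~\ref{lem:phi_prop}(2), summed against $|\hat h_{\bm k}|^2$.
\item Estimate \eqref{eqn:op_est_2} follows from $0\le\varphi_1(z)\le1$ in Lemma~\ref{lem:phi_prop}(3).
\item Estimate \eqref{eqn:inv_op_est_1} follows from the two-sided bound $1+z/2\le\varphi_1(z)^{-1}\le1+z$ in Lemma~\ref{lem:phi_prop}(3). Multiplying by $|\hat h_{\bm k}|^2$ and summing, the constant term gives $\Vert h\Vert^2$ and the term $z$ gives $\tau\nu\Vert\mathcal L^{1/2}h\Vert^2$.
\item Estimate \eqref{eqn:inv_op_est_2} follows from $\varphi_1(z)^{-1}\ge z$ in Lemma~\ref{lem:phi_prop}(4). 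At $z=0$ we have $\varphi_1(0)^{-1}=1\ge 0$, so the inequality holds there as well.
\end{itemize}

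The only genuine content is therefore the scalar facts in Lemma~\ref{lem:phi_prop}, which we are allowed to assume. If they needed checking, the main one is $1+z/2\le z/(1-e^{-z})\le 1+z$. The upper bound is equivalent to $e^{z}\ge 1+z$, after rewriting $z/(1-e^{-z})\le 1+z$ as $(1+z)e^{-z}\le 1$. The lower bound reduces to $(1+z/2)(1-e^{-z})\le z$, that is, $g(z):=z-(1+z/2)(1-e^{-z})\ge0$. This holds because $g(0)=0$ and $g'(z)=\tfrac12\bigl(1-e^{-z}(1+z)\bigr)\ge0$, where the last inequality is again $e^{z}\ge 1+z$.

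One caveat: functions in $V$ need not lie in the domain of $\mathcal L^{1/2}$ with finite norm in every expression. If a right-hand side is infinite, the corresponding inequality holds trivially. Equality \eqref{eqn:op_est_1} involves only bounded multipliers, since $z\varphi_1(z)=1-e^{-z}\le1$, so it holds for all $h\in H$. I expect no step here to be an obstacle; the proof is essentially bookkeeping.
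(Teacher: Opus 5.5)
Your proposal is correct and matches the paper's argument. The paper justifies the lemma only by "combining the spectral representation \eqref{eqn:inv_expansion} with Lemma~\ref{lem:phi_prop}", which is exactly your mode-by-mode reduction to the scalar facts of Lemma~\ref{lem:phi_prop}. Your closing caveat about possibly infinite norms is unnecessary: for $h\in V\subset\dot H^1_{\mathrm{per}}$ one has $\Vert\mathcal L^{1/2}h\Vert=\Vert\nabla h\Vert<\infty$, so every quantity in the lemma is finite.
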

\ignore{
\subsection{Long-time stability for the ETD-mr-SAV scheme}

Here we derive the $L^\infty(0,\infty; L^2)$ and $L^2(0,T; H^1)$ estimate for the \textbf{ETD-mr-SAV-MS} scheme \eqref{eqn:mrSAV_ETD1}.

\begin{theorem}
Assume $(\omega^i,q^i)\in (\dot{ H}^\alpha_{per}(\Omega)\cap V)\times\mathbb{R}$ for $i=0,1$ with $\alpha\ge \frac34$, and $f\in L^\infty(0,\infty; H)$.
Let
\begin{equation}\label{theta}
\theta := \min\{\nu\lambda_1,\gamma\}>0.
\end{equation}
Then for all $n\ge1$, the ETD-mr-SAV-MS scheme is long-time stable in the sense that
\begin{equation}\label{eqn:L2_estimate_sav}
\begin{aligned}
    &\|\omega^{n+1}\|^2 + (q^{n+1})^2 \leq \frac{1}{\prod_{k=1}^{n+1}(1 + \tau _k\theta)}( \| \omega^{0} \|^2 + (q^{0})^2) + \frac{1}{\theta} \left(\frac{4}{\nu \lambda_1} \|f\|_{L^\infty(0,\infty;H)}^2 +  2 \gamma\right)
\end{aligned}
\end{equation}
Moreover, for any $0\le n\le N-1$, the following cumulative dissipation estimate holds:
\begin{equation}\label{eqn:local_L2H1_estimate}
    \| u^{n+1} \|^2 + (q^{n+1})^2 + \sum_{k=1}^{n+1}\tau_{k} (\nu \|\mathcal{L}^{1/2} \omega^{k}\|^2 +  \gamma(q^{k})^2) \leq \| \omega^{0} \|^2 + (q^{0})^2 + T ( \frac{4}{\nu \lambda_1} \|f\|_{L^\infty(0,\infty;H)}^2 + 2 \gamma ).
\end{equation}
\end{theorem}

\begin{proof}
We first rewrite the scheme in a form suitable for energy estimate. Applying the inverse operators $(\varphi_1(\tau_{n+1} \nu \mathcal{L}))^{-1}$ and $(\varphi_1(\tau_{n+1} \gamma))^{-1}$ to \eqref{eqn:mrSAV_ETD1_u} and \eqref{eqn:mrSAV_ETD1_q} respectively, the ETD-mr-SAV scheme can be rewritten as
\begin{equation}\label{eqn:1st_engEst_1}
   (\varphi_1(\tau_{n+1} \nu \mathcal{L}))^{-1} (\omega^{n+1} - \omega^{n}) + \tau_{n+1} \nu \mathcal{L} \omega^{n} = \tau_{n+1} (f^n - q^{n+1} B(\omega^{n}, \omega^{n})),
\end{equation}
and 
\begin{equation}\label{eqn:1st_engEst_2}
   (\varphi_1(\tau_{n+1} \gamma))^{-1} (q^{n+1} - q^{n}) + \tau_{n+1} \gamma q^{n} = \tau_{n+1} (\langle B(\omega^{n}, \omega^{n}),\omega^{n+1} \rangle + \gamma).
\end{equation}

Consider (\eqref{eqn:1st_engEst_1}, $2\omega^{n+1}$) + (\eqref{eqn:1st_engEst_2}, $2q^{n+1}$). 
Note that the nonlinear term involving $B(\cdot,\cdot)$ cancels, we obtain
\begin{equation}\label{eqn:1st_engEst_3}
\begin{aligned}
    &2\langle \varphi_1(\tau_{n+1} \nu \mathcal{L}))^{-1} (\omega^{n+1} - \omega^{n}), \omega^{n+1} \rangle +  2(\varphi_1(\tau_{n+1} \gamma))^{-1} (q^{n+1} - q^{n}) q^{n+1} \\
    &\qquad + 2\tau_{n+1} \nu \langle \mathcal{L} \omega^{n} , \omega^{n+1} \rangle + 2\tau_{n+1}\gamma q^{n} q^{n+1}= 2\tau_{n+1} \langle f^{n},  \omega^{n+1} \rangle  + 2\tau_{n+1} \gamma q^{n+1}.
\end{aligned}
\end{equation}

For the term on the left-hand side, applying the identities $2\langle a-b,a\rangle=\|a\|^2-\|b\|^2+\|a-b\|^2$ and $\langle a,b\rangle =\|a\|^2+\|b\|^2-\|a-b\|^2$, we obtain
\begin{equation}\label{eqn:1st_engEst_4}
\begin{aligned}
    &2\langle (\varphi_1(\tau_{n+1} \nu \mathcal{L}))^{-1} (\omega^{n+1} - \omega^{n}), \omega^{n+1} \rangle +  2(\varphi_1(\tau_{n+1} \gamma))^{-1} (q^{n+1} - q^{n}) q^{n+1} \\
    =& \| (\varphi_1(\tau_{n+1} \nu \mathcal{L}))^{-1/2} \omega^{n+1}  \|^2 - \| (\varphi_1(\tau_{n+1} \nu \mathcal{L}))^{-1/2} \omega^{n}  \|^2 + \| (\varphi_1(\tau_{n+1} \nu \mathcal{L}))^{-1/2} (\omega^{n+1} - \omega^{n})  \|^2\\
    & + ((\varphi_1(\tau_{n+1} \gamma))^{-1/2} q^{n+1})^2 - ((\varphi_1(\tau_{n+1} \gamma))^{-1/2} q^{n})^2 + ((\varphi_1(\tau_{n+1} \gamma))^{-1/2} (q^{n+1} -  q^{n}) )^2,
\end{aligned}
\end{equation}
and
\begin{equation}\label{eqn:1st_engEst_5}
    \begin{aligned}
        2\nu \langle \mathcal{L} \omega^{n}, \omega^{n+1} \rangle + 2\gamma q^{n} q^{n+1} =&  \nu \Vert \mathcal{L}^{1/2} \omega^n \Vert^2 + \nu \Vert \mathcal{L}^{1/2} \omega^{n+1} \Vert^2 - \nu \Vert \mathcal{L}^{1/2} (\omega^{n+1} - \omega^{n}) \Vert^2\\
       & + \gamma (q^{n})^2 + \gamma (q^{n+1})^2 - \gamma (q^{n+1} + q^{n})^2.
    \end{aligned}
\end{equation}
In addition, by recalling Lemma \ref{lem:phi_prop} and inequality \eqref{eqn:inv_op_est_2}, we have the following relation:
\begin{equation}\label{eqn:1st_engEst_6}
\begin{aligned}
    &\| (\varphi_1(\tau_{n+1} \nu \mathcal{L}))^{-1/2} (\omega^{n+1} - \omega^{n})  \|^2 + ((\varphi_1(\tau_{n+1} \gamma))^{-1/2} (q^{n+1} -  q^{n}) )^2 \\
    \geq & \tau _{n+1}\nu \Vert \mathcal{L}^{1/2} (\omega^{n+1} - \omega^{n}) \Vert^2 + \tau_{n+1} \gamma (q^{n+1} + q^{n})^2.
\end{aligned}
\end{equation}
Substituting \eqref{eqn:1st_engEst_4}-\eqref{eqn:1st_engEst_6} into \eqref{eqn:1st_engEst_3}, we obtain
\begin{equation}\label{eqn:1st_engEst_7}
\begin{aligned}
    &\| (\varphi_1(\tau_{n+1} \nu \mathcal{L}))^{-1/2} \omega^{n+1} \|^2 + ((\varphi_1(\tau_{n+1} \gamma))^{-1/2} q^{n+1})^2 + \nu \Vert \mathcal{L}^{1/2} \omega^{n+1} \Vert^2\\
    &+ \nu \Vert \mathcal{L}^{1/2} \omega^n \Vert^2 + \gamma (q^{n})^2 + \gamma (q^{n+1})^2  \\
    \leq & \| (\varphi_1(\tau_{n+1} \nu \mathcal{L}))^{-1/2} \omega^{n} \|^2 + ((\varphi_1(\tau_{n+1} \gamma))^{-1/2} q^{n})^2 + 2\tau_{n+1} \langle f^{n}, \omega^{n+1} \rangle  + 2\tau_{n+1} \gamma q^{n+1}
\end{aligned}
\end{equation}
We now estimate the right-hand side. By virtue of the Cauchy--Schwarz inequality and Young's inequality,
\begin{equation}\label{eqn:1st_engEst_8}
\begin{aligned}
    2\langle f^{n}, \omega^{n+1} \rangle  + 2 \gamma q^{n+1} \leq & \frac{2}{\sqrt{\lambda_1}} \|f^{n}\| \|\mathcal{L}^{1/2}\omega^{n+1}\| + 2\gamma q^{n+1}  \\
    \leq & \frac{\nu}{2} \|\mathcal{L}^{1/2}\omega^{n+1}\|^2 + \frac{\gamma}{2} (q^{n+1})^2 + \frac{4}{\nu \lambda_1} \|f^n\|^2  +  2 \gamma
\end{aligned}
\end{equation}
Substituting \eqref{eqn:1st_engEst_7} back into \eqref{eqn:1st_engEst_8} and using inequality \eqref{eqn:inv_op_est_1}, we can obtain
\begin{equation}\label{eqn:1st_engEst_9}
    \| \omega^{n+1} \|^2 + \tau_{n+1} \nu \|\mathcal{L}^{1/2} \omega^{n+1}\|^2 + (1 + \tau_{n+1}\gamma)(q^{n+1})^2 \leq \| \omega^{n} \|^2 + (q^{n})^2 + \frac{4\tau_{n+1}}{\nu \lambda_1} \|f^n\|^2  +  2\tau_{n+1} \gamma.
\end{equation}

% \begin{equation}
% \begin{aligned}
%     &\| \omega^{n+1} \|^2 + \tau_{n+1} \nu \|\mathcal{L}^{1/2} \omega^{n+1}\|^2 + (1 + \tau_{n+1}\gamma)(q^{n+1})^2 \\ 
%     \geq & (1 + \tau_{n+1} \nu \lambda_1 )  \|\omega^{n+1}\|^2  + (1 + \tau_{n+1}\gamma)(q^{n+1})^2 \\  
%     \geq & (1 + \tau_{n+1}\theta) (\|\omega^{n+1}\|^2 + (q^{n+1})^2)
% \end{aligned}
% \end{equation}

By applying the Poincar\'e inequality and setting $\theta = \min\{\nu \lambda_1, \gamma\}$, we obtain the one-step inequality
\begin{equation}
    (1 + \tau_{n+1}\theta) (\|\omega^{n+1}\|^2 + (q^{n+1})^2) \leq \| \omega^{n} \|^2 + (q^{n})^2 + \frac{4\tau_{n+1}}{\nu \lambda_1} \|F^n\|^2  +  2\tau_{n+1} \gamma
\end{equation}

Iterating \eqref{eqn:1st_engEst_9} and using $\|f^{k+\frac12}\|\le \|f\|_{L^\infty(0,\infty;\bm H)}$, we obtain
\begin{equation}
\begin{aligned}
    &\|\omega^{n+1}\|^2 + (q^{n+1})^2  \leq \left(\prod_{k=1}^{n+1} \frac{1}{1 + \tau _k\theta}\right) (\| \omega^{0} \|^2 + (q^{0})^2) \\
    &\quad+ \left(\frac{4}{\nu \lambda_1} \|F\|_{L^2(0,T;\bm H)}^2 +  2 \gamma\right)\sum_{k=1}^{n+1} \left(\prod_{m=k+1}^{n+1} \frac{1}{1 + \tau_{m}\theta}\right)\frac{\tau_k}{1+\tau_k\theta} \\
\end{aligned}
\end{equation}

Let \(P_k = \prod_{m=k}^{n+1} \frac{1}{1 + \tau_m\theta}\). Then \(P_k = \frac{P_{k+1}}{1+\tau_k\theta}\), and the summation term can be rewritten as
\begin{equation}\label{eqn:1st_engEst_10}
    \sum_{k=1}^{n+1} \left(\prod_{m=k+1}^{n+1} \frac{1}{1 + \tau_{m}\theta}\right)\frac{\tau_k}{1+\tau_k\theta} = \sum_{k=1}^{n+1} \tau_{k} \prod_{m=k}^{n+1} \frac{1}{1 + \tau_m \theta} :=  \sum_{k=1}^{n+1} \tau_{k} P_k
\end{equation}
Note that
\begin{equation}
    \tau_k P_k = \frac{P_{k+1} - P_k}{\theta}.
\end{equation}
Then the summation \eqref{eqn:1st_engEst_10} can be estimate as  
\begin{equation}
    \sum_{k=1}^{n+1} \tau_{k} P_k = \sum_{k=1}^{n+1} \frac{P_{k+1} - P_k}{\theta} = \frac{1}{\theta} (P_{n+1} - P_1) = \frac{1}{\theta} (1 - \prod_{m=1}^{n+1} \frac{1}{1 + \tau _k \theta}) \leq \frac{1}{\theta}
\end{equation}
Consequently, we obtain \eqref{eqn:L2_estimate_sav}. For \eqref{eqn:1st_engEst_9}, we sum from $k=1$ to $n+1$ to obtain

\begin{equation}
    \| \omega^{n+1} \|^2 + (q^{n+1})^2 + \sum_{k=1}^{n+1}\tau_{k} (\nu \|\mathcal{L}^{1/2} \omega^{k}\|^2 +  \gamma(q^{k})^2) \leq \| \omega^{0} \|^2 + (q^{0})^2 + T ( \frac{4}{\nu \lambda_1} \|F\|_{L^\infty(0,\infty;\bm H)}^2  +  2 \gamma )
\end{equation}
which implies \eqref{eqn:local_L2H1_estimate}. This completes the proof.
\end{proof}
}

%%%%%
\subsection{Long-time stability for the ETD-mr-SAV-MS2 scheme}

We now derive an $L^\infty(0,\infty;L^2)$ bound for the ETD-mr-SAV-MS2 scheme \eqref{eqn:2nd_mrSAV}.

\begin{theorem}\label{thm:etd_sav_L_stability}
Assume the initial data $(\omega^i,q^i)\in  V\times\mathbb{R} $ for $i=0,1$, and the forcing term $f\in L^\infty(0,\infty; H)$. 
% Then there exists a constant $\theta>0$, independent of the time steps, such that for all $n\ge 1$, 
The ETD-mr-SAV-MS2-L scheme \eqref{eqn:2nd_mrSAV} is uniquely solvable with $\omega^n\in V\ \forall n\ge 1$, and $B(\tilde{\omega}^{n+\frac{1}{2}},\tilde{\omega}^{n+\frac{1}{2}})\in L^2$. Moreover, it is long-time stable in the sense that
\begin{equation}\label{eqn:L2_estimate_2}
\Vert \omega^{n+1} \Vert^2 + |q^{n+1}|^2
\le e^{-\theta \sum_{i=1}^{n}\tau_{i+1}}\bigl(\Vert \omega^1\Vert^2+|q^1|^2\bigr)
+\frac{1}{\theta}\left(\frac{1}{\lambda_1\nu}\Vert f\Vert_{L^\infty(0,\infty;H)}^2+\gamma\right),
\end{equation}
where
$$
\theta := \min\{\nu\lambda_1,\gamma\}>0.
$$
In addition, for any $1\le n\le N-1$, the following cumulative dissipation estimate holds:
\begin{equation}\label{eqn:L2H1_estimate}
\Vert \omega^{n+1} \Vert^2 + |q^{n+1}|^2
+ \sum_{k=1}^{n}\int_{t_k}^{t_{k+1}}\Bigl(\nu\Vert \mathcal{L}^{1/2}\omega(t)\Vert^2 + \gamma |q(t)|^2\Bigr)\,dt
\le \Vert \omega^{1} \Vert^2 + |q^{1}|^2
+ \frac{1}{\lambda_1 \nu} \Vert f \Vert_{L^2(0,T;H)}^2 +  \gamma T,
\end{equation}
where $T=\sum_{i=1}^{n+1}\tau_{i}$.
% Moreover, for any $1\le n\le N-1$, the following cumulative dissipation estimate holds:
% \begin{equation}\label{eqn:L2H1_estimate}
% \Vert \omega^{n+1} \Vert^2
% + \sum_{k=1}^{n}\int_{t_k}^{t_{k+1}}\Bigl(\nu\Vert \mathcal{L}^{1/2}\omega(t)\Vert^2 + \gamma |q(t)|^2\Bigr)\,dt
% \le \Vert \omega^{1} \Vert^2
% + T\left(\frac{1}{\lambda_1 \nu} \Vert f \Vert_{L^2(0,T;\bm H)}^2 +  \gamma\right).
% \end{equation}
\end{theorem}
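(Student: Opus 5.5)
The key observation is that on each step $(t_n,t_{n+1}]$ the scheme \eqref{eqn:2nd_mrSAV} is a \emph{linear} system of ODEs in continuous time, in which the nonlinearity enters only through the frozen datum $b:=B(\tilde{\omega}^{n+\frac12},\tilde{\omega}^{n+\frac12})$. The plan is therefore to carry out a continuous-in-time energy estimate on each subinterval, where the $b$-terms cancel exactly, and then to chain these estimates across steps. No discrete quadrature or $\varphi$-function identities are needed for this argument.

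\textbf{Step 1: solvability and regularity, by induction on $n$.} Assume $\omega^{n},\omega^{n-1}\in V$. In two dimensions the Sobolev embeddings give $\nabla^\perp\Delta^{-1}\tilde\omega\in H^2\subset L^\infty$ for $\tilde\omega\in V$, while $\nabla\tilde\omega\in L^2$. Hence $b\in L^2$ (and in fact $b\in H$, since $\int \bm u\cdot\nabla\tilde\omega=0$). The coupled linear system for $(\omega,q)$ on $[t_n,t_{n+1}]$ is the heat equation perturbed by the bounded rank-one coupling $(\omega,q)\mapsto(-qb,\langle b,\omega\rangle)$. Equivalently, it is the Volterra equation \eqref{eqn:2nd_etd_mrSAV_ms_q} with a continuous kernel. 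By standard semigroup and Volterra theory \cite{Brunner2017}, it has a unique mild solution $q\in C[t_n,t_{n+1}]$ and $\omega\in C([t_n,t_{n+1}];H)\cap L^2(t_n,t_{n+1};V)$. Parabolic smoothing, using $f^{n+\frac12}-qb\in L^2$, gives $\omega(t_{n+1})\in V$ (indeed $\dot H^2$). This closes the induction and shows that $\tilde\omega^{n+\frac12}$ and $B(\tilde\omega^{n+\frac12},\tilde\omega^{n+\frac12})$ are well defined at every step. I expect this step to be the main technical obstacle. The delicate point is justifying the energy identity for the mild solution, which I would handle by Galerkin (Fourier) truncation and passage to the limit.

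\textbf{Step 2: the local energy identity.} Test \eqref{eqn:2nd_mrSAV_u} with $\omega$ and multiply \eqref{eqn:2nd_mrSAV_q} by $q$. The terms $q\langle b,\omega\rangle$ cancel exactly; this is the whole purpose of the SAV pairing. What remains is
\[
\frac12\frac{d}{dt}\bigl(\|\omega\|^2+q^2\bigr)+\nu\|\mathcal L^{1/2}\omega\|^2+\gamma q^2=\langle f^{n+\frac12},\omega\rangle+\gamma q .
\]
Apply Cauchy--Schwarz, Poincar\'e \eqref{eqn:poincare} and Young to the right-hand side:
\[
\langle f,\omega\rangle\le \tfrac{\nu}{2}\|\mathcal L^{1/2}\omega\|^2+\tfrac{1}{2\lambda_1\nu}\|f\|^2,\qquad \gamma q\le \tfrac{\gamma}{2}q^2+\tfrac{\gamma}{2}.
\]
Absorbing these terms yields
\[
\frac{d}{dt}\bigl(\|\omega\|^2+q^2\bigr)+\nu\|\mathcal L^{1/2}\omega\|^2+\gamma q^2\le \frac{1}{\lambda_1\nu}\|f^{n+\frac12}\|^2+\gamma .
\]

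\textbf{Step 3: the cumulative dissipation estimate \eqref{eqn:L2H1_estimate}.} Integrate the last inequality over $[t_k,t_{k+1}]$ and sum over $k=1,\dots,n$. The continuity of $(\omega,q)$ at the nodes ($\omega(t_k)=\omega^k$, $q(t_k)=q^k$) makes the sum telescope. The forcing contribution is bounded by $\|f\|_{L^2(0,T;H)}^2$, interpreting $\tau_{k+1}\|f^{k+\frac12}\|^2$ as the quadrature of $\int_{t_k}^{t_{k+1}}\|f\|^2$, or simply bounding it by its supremum. This gives \eqref{eqn:L2H1_estimate}.

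\textbf{Step 4: the uniform bound \eqref{eqn:L2_estimate_2}.} Use Poincar\'e again: $\nu\|\mathcal L^{1/2}\omega\|^2+\gamma q^2\ge\theta(\|\omega\|^2+q^2)$. With $E:=\|\omega\|^2+q^2$ this turns Step 2 into
\[
E'+\theta E\le K:=\tfrac{1}{\lambda_1\nu}\|f\|_{L^\infty(0,\infty;H)}^2+\gamma
\]
on each subinterval. Gronwall on $[t_k,t_{k+1}]$ gives
\[
E(t_{k+1})\le e^{-\theta\tau_{k+1}}E(t_k)+\tfrac{K}{\theta}\bigl(1-e^{-\theta\tau_{k+1}}\bigr).
\]
Iterating from $k=1$ to $n$, the forcing contributions telescope to $\frac{K}{\theta}\bigl(1-e^{-\theta\sum_{i=1}^{n}\tau_{i+1}}\bigr)\le K/\theta$. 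This yields \eqref{eqn:L2_estimate_2} uniformly in $n$, for any step sizes and any $\nu>0$.
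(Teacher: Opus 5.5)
Your proposal is correct and is essentially the paper's own proof. The paper likewise treats each step $(t_n,t_{n+1}]$ as a linear continuous-in-time system with the frozen datum $B(\tilde\omega^{n+\frac12},\tilde\omega^{n+\frac12})\in L^2$, solved through the Volterra equation \eqref{eqn:2nd_etd_mrSAV_ms_q}, tests with $(\omega,q)$ so the coupling terms cancel, and uses Cauchy--Schwarz, Young and Poincar\'e; it then applies Gronwall per step and iterates for \eqref{eqn:L2_estimate_2}, or integrates and sums for \eqref{eqn:L2H1_estimate}.

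The one soft spot, which the paper shares, is the forcing term in your Step 3. The argument actually produces $\frac{1}{\lambda_1\nu}\sum_{k=1}^{n}\tau_{k+1}\|f^{k+\frac12}\|^2$, and a midpoint value $\|f^{k+\frac12}\|$ is not controlled by $\int_{t_k}^{t_{k+1}}\|f\|^2\,dt$; bounding by the supremum gives $\frac{T}{\lambda_1\nu}\|f\|_{L^\infty(0,\infty;H)}^2$ instead. So the $\|f\|_{L^2(0,T;H)}^2$ form of \eqref{eqn:L2H1_estimate} is only literally justified for, e.g., time-independent forcing.
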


\begin{proof}
We first work on a single time interval $t\in (t_n,t_{n+1}]$ and then iterate the resulting estimate.

Since  $\omega^n,\omega^{n-1}\in V $, we have
$B(\tilde{\omega}^{n+\frac{1}{2}},\tilde{\omega}^{n+\frac{1}{2}})\in L^{2}$.
Hence, the scalar auxiliary variable $q(t)$ determined by the differential-integral equation \eqref{eqn:2nd_etd_mrSAV_ms_q} of convolution type is uniquely solvable since $\mathcal{L}=-\Delta$ generates a $C_0$-semigroup on $\dot{L}^2=H$ \cite{Brunner2017}. Henceforth, the vorticity equation \eqref{eqn:2nd_mrSAV_u} is uniquely solvable with the solution at $t_{n+1}$
belonging to $V\cap\dot{H}^2_{per}$ for all $t\in (t_n,t_{n+1}]$. Consequently, we deduce that $\omega^n\in V\quad \forall n\ge 1$. This completes the unique solvability proof.

For the long-time stability, we take the inner product of \eqref{eqn:2nd_mrSAV_u} with $\omega(t)$ in $H$, and add it to the product of \eqref{eqn:2nd_mrSAV_q} with $q(t)$. Using the cancellation of the coupled terms (by design), we obtain
\begin{equation}\label{eqn:inner_2_mrSAV_u}
\frac12 \frac{d}{dt}\Vert \omega \Vert^2 + \nu\Vert \mathcal{L}^{1/2}\omega\Vert^2
+\frac12 \frac{d}{dt}|q(t)|^2 + \gamma|q(t)|^2
= \langle f^{n+\frac{1}{2}},\omega\rangle + \gamma q(t).
\end{equation}
By the Cauchy--Schwarz inequality and the Poincar\'e inequality \eqref{eqn:poincare},
\begin{equation}\label{eqn:2_mrSAV_est}
\frac{d}{dt}\bigl(\Vert \omega \Vert^2 + |q|^2\bigr)
+ \nu\Vert \mathcal{L}^{1/2}\omega\Vert^2 + \gamma |q|^2
\le \frac{1}{\lambda_1 \nu}\Vert f^{n+\frac{1}{2}}\Vert^2 + \gamma.
\end{equation}
Applying again \eqref{eqn:poincare} to bound $\nu\Vert \mathcal{L}^{1/2}\omega\Vert^2$ from below by $\nu\lambda_1\Vert \omega\Vert^2$, and setting
\begin{equation}\label{eqn:theta}
\theta=\min\{\nu\lambda_1,\gamma\}>0,
\end{equation}
we obtain
\begin{equation}\label{eqn:2_mrSAV_est_2}
\frac{d}{dt}\Bigl(e^{\theta t}\bigl(\Vert \omega \Vert^2 + |q|^2\bigr)\Bigr)
\le e^{\theta t}\left(\frac{1}{\lambda_1\nu}\Vert f^{n+\frac{1}{2}}\Vert^2 + \gamma\right).
\end{equation}
Integrating \eqref{eqn:2_mrSAV_est_2} over $(t_n,t_{n+1}]$ yields
\begin{equation}\label{eqn:2_mrSAV_est_3}
\Vert \omega^{n+1} \Vert^2 + |q^{n+1}|^2
\le e^{-\theta \tau_{n+1}}\bigl(\Vert \omega^{n}\Vert^2+|q^{n}|^2\bigr)
+\frac{1-e^{-\theta \tau_{n+1}}}{\theta}\left(\frac{1}{\lambda_1 \nu}\Vert f^{n+\frac{1}{2}}\Vert^2 + \gamma\right).
\end{equation}
Iterating the above inequality gives \eqref{eqn:L2_estimate_2}.

For \eqref{eqn:L2H1_estimate}, we integrate \eqref{eqn:2_mrSAV_est} over $(t_k,t_{k+1}]$ and sum from $k=1$ to $n$ to obtain
\begin{equation}
\begin{aligned}
&\Vert \omega^{n+1}\Vert^2 + |q^{n+1}|^2
+\sum_{k=1}^n\int_{t_k}^{t_{k+1}}\Bigl(\nu\Vert \mathcal{L}^{1/2}\omega(t)\Vert^2+\gamma|q(t)|^2\Bigr)\,dt \\
\le & \Vert  \omega^1\Vert^2 + |q^1|^2
+\sum_{k=1}^n\int_{t_k}^{t_{k+1}}\left(\frac{1}{\lambda_1\nu}\Vert f^{k+\frac12}\Vert^2+\gamma\right)\,dt,
\end{aligned}
\end{equation}
which implies \eqref{eqn:L2H1_estimate}.
\end{proof}

\begin{remark}

    We also observe that the solution can be more regular provided $f$ is more regular. Indeed, \eqref{eqn:2nd_mrSAV_u} and elliptic regularity imply $\omega^{n+1}\in \dot{H}^{m+1}_{per}$ provided $\omega^n, \omega^{n-1}\in \dot{H}^m_{per}, f^{n+\frac12}\in \dot{H}^{m-1}_{per}$. This further implies that $q\in C^m([t_n, t_{n+1}])$ and analytic on $(t_n, t_{n+1})$ by \eqref{eqn:2nd_etd_mrSAV_ms_q} \cite{Brunner2017}. In our computation, we will choose analytic initial data and external forcing. Hence the smoothness of our numerical solution is guaranteed.

\end{remark}

% Next, We derive an $L^\infty(0,\infty;H^{1})$ bound for the ETD-mr-SAV-MS2 scheme \eqref{eqn:2nd_mrSAV}.

% \begin{thm}
    
% \end{thm}

% \begin{proof}

% \begin{equation}
% \begin{aligned}
%     \omega^{n+1} = & \varphi_0(\tau_{n+1} \nu \mathcal{L})\omega^{n} + \tau \varphi_1(\tau\nu \mathcal{L})f^{n+\frac{1}{2}} -  \int_{0}^{\tau_{n+1}} e^{-(\tau_{n+1}-s)\nu \mathcal{L}} q(s+t_{n+1}) B(\tilde{\omega}^{n+\frac{1}{2}},\tilde{\omega}^{n+\frac{1}{2}}) ds,\\
% \end{aligned}
% \end{equation}

% \end{proof}

\section{Numerical experiments}\label{sec:5}

In this section, we report results of several numerical experiments that illustrate the temporal accuracy, long-time stability, and practical performance of the proposed second-order scheme, including its usefulness as a building block for two time-adaptive strategies, a variable-step approach, and a variable-step-variable-order approach. In particular, we verify the expected temporal convergence order under variable time stepping (in the spirit of \cite{chen2019variable}), demonstrate the ability of the adaptive strategy to adjust time-step sizes according to the speed of temporal evolution while maintaining a prescribed error tolerance, and test long-time stability in a Kolmogorov forcing setting. For the Kolmogorov flow, we further provide a comparative study of different schemes in terms of long-time boundedness and statistical consistency of key physical quantities.

All simulations are performed for the two-dimensional NSE on a $2\pi$ periodic box. A Fourier spectral method is used for spatial discretization. Since solutions to the NSE are analytic in space under spatially analytic forcing \cite{constantin1988navier}, the spatial discretization error is negligible compared with the temporal discretization error due to the spectral accuracy of the Fourier method. 

\subsection{Convergence test}
Here we test the rate of convergence of the ETD-mr-SAV-MS2-L scheme using both fixed and variable time-steps. We also compare its performance against the ETD-mr-SAV-MS2 scheme  \cite{WangWangZhang2026}, the BDF2-mr-SAV scheme \cite{han2025highly}, and the classical ETD-MS2 scheme \eqref{eqn:ETD-MS2}. In addition, we invesitgate the effective of the mean-reversion parameter $\gamma$.

\begin{exm}[Accuracy test]\label{exm:conv_test}
In this example, we consider the periodic two-dimensional Navier--Stokes equations on  $\Omega=(0,2\pi)^2$. We set the viscosity $\nu=10^{-3}$, the mean-reverting parameter $\gamma=1000$, and two different terminal times $T=0.1, 1$. The external force is set to be
\begin{equation*}
    f(x,y,t)=\cos(x).
\end{equation*}
The initial vorticity is prescribed by a smooth trigonometric field containing multiple spatial modes,
\begin{equation}\label{eqn:smooth_trig_vorticity}
    \omega_0(x,y) = \sum_{k,m=1}^{10} \frac{1}{(k^2+m^2)^{3/2}}\cos(kx)\cos(my).
\end{equation}
The initial value of the auxiliary variable is set to $q^0=1$. A Fourier spectral method with $256$ modes in each spatial direction is used for spatial discretization. Since the solution and the forcing are smooth and periodic, the spatial discretization error is expected to be negligible compared with the temporal discretization error. A reference solution is generated by the ETDRK4 scheme \cite{kassam2005fourth} with the uniform time step $\tau_{\rm ref}=0.1\times 2^{-10}$.
\end{exm}

We first consider the fixed-step case. The time step is set to  $\tau=0.1\times 2^{-k}$, with the tested values listed in Tables~ \ref{tab:fixed-step-convergence-cpu-1} and \ref{tab:fixed-step-convergence-cpu} for the two terminal times. Additional time steps sizes are considered in the regime when the classical ETD-MS2 scheme numerically blows up at $T=1$ in order to better understand the performance of the scheme. We compare the classical ETD-MS2 scheme, the BDF2-mr-SAV scheme, the ETD-mr-SAV-MS2 scheme, and the proposed ETD-mr-SAV-MS2-L scheme. The errors are measured at the final time $T = 0.1$ and $T=1$ using the discrete $L^2$ error of the vorticity, and the CPU time is also reported in order to compare accuracy and efficiency.

\begin{table}[htbp]
\centering
\caption{Vorticity $L^2$ errors, observed convergence rates, and CPU times for the tested schemes at $T=0.1$ under fixed time stepping. Here $\tau$ denotes the uniform time-step size.}
\label{tab:fixed-step-convergence-cpu-1}
\resizebox{\textwidth}{!}{%
\begin{tabular}{ccccccccccccc}
\toprule
$\tau$ & \multicolumn{3}{c}{ETD-MS2} & \multicolumn{3}{c}{BDF2-mr-SAV} & \multicolumn{3}{c}{ETD-mr-SAV-MS2} & \multicolumn{3}{c}{ETD-mr-SAV-MS2-L} \\
\cmidrule(lr){2-4} \cmidrule(lr){5-7} \cmidrule(lr){8-10} \cmidrule(lr){11-13}
 & Error & Order & CPU (s) & Error & Order & CPU (s) & Error & Order & CPU (s) & Error & Order & CPU (s) \\
\midrule
$0.1\times 2^{-2}$ & $2.332\times 10^{-6}$ & -- & 0.20 & $3.097\times 10^{-6}$ & -- & 0.14 & $2.332\times 10^{-6}$ & -- & 0.18 & $1.573\times 10^{-6}$ & -- & 0.28 \\
$0.1\times 2^{-3}$ & $6.713\times 10^{-7}$ & 1.80 & 0.29 & $9.939\times 10^{-7}$ & 1.64 & 0.18 & $6.713\times 10^{-7}$ & 1.80 & 0.29 & $5.797\times 10^{-7}$ & 1.44 & 0.43 \\
$0.1\times 2^{-3.2}$ & $5.064\times 10^{-7}$ & 2.03 & 0.34 & $7.596\times 10^{-7}$ & 1.94 & 0.20 & $5.064\times 10^{-7}$ & 2.03 & 0.34 & $4.461\times 10^{-7}$ & 1.89 & 0.50 \\
$0.1\times 2^{-3.3}$ & $4.446\times 10^{-7}$ & 1.88 & 0.34 & $6.726\times 10^{-7}$ & 1.76 & 0.20 & $4.446\times 10^{-7}$ & 1.88 & 0.34 & $3.957\times 10^{-7}$ & 1.73 & 0.50 \\
$0.1\times 2^{-3.4}$ & $3.846\times 10^{-7}$ & 2.09 & 0.36 & $5.855\times 10^{-7}$ & 2.00 & 0.21 & $3.846\times 10^{-7}$ & 2.09 & 0.37 & $3.449\times 10^{-7}$ & 1.98 & 0.53 \\
$0.1\times 2^{-3.5}$ & $3.400\times 10^{-7}$ & 1.78 & 0.39 & $5.181\times 10^{-7}$ & 1.76 & 0.22 & $3.400\times 10^{-7}$ & 1.78 & 0.39 & $3.078\times 10^{-7}$ & 1.64 & 0.57 \\
$0.1\times 2^{-3.6}$ & $3.021\times 10^{-7}$ & 1.71 & 0.41 & $4.608\times 10^{-7}$ & 1.69 & 0.23 & $3.021\times 10^{-7}$ & 1.71 & 0.42 & $2.759\times 10^{-7}$ & 1.58 & 0.61 \\
% $0.1\times 2^{-3}$ & $6.713\times 10^{-7}$ & -- & 0.30 & $9.939\times 10^{-7}$ & -- & 0.21 & $6.713\times 10^{-7}$ & -- & 0.29 & $5.797\times 10^{-7}$ & -- & 0.42 \\
% $0.1\times 2^{-3.2}$ & $5.064\times 10^{-7}$ & 2.03 & 0.34 & $7.596\times 10^{-7}$ & 1.94 & 0.23 & $5.064\times 10^{-7}$ & 2.03 & 0.38 & $4.461\times 10^{-7}$ & 1.89 & 0.49 \\
% $0.1\times 2^{-3.4}$ & $3.846\times 10^{-7}$ & 1.98 & 0.37 & $5.855\times 10^{-7}$ & 1.88 & 0.25 & $3.846\times 10^{-7}$ & 1.98 & 0.41 & $3.449\times 10^{-7}$ & 1.86 & 0.59 \\
% $0.1\times 2^{-3.6}$ & $3.021\times 10^{-7}$ & 1.74 & 0.42 & $4.608\times 10^{-7}$ & 1.73 & 0.27 & $3.021\times 10^{-7}$ & 1.74 & 0.43 & $2.759\times 10^{-7}$ & 1.61 & 0.67 \\
$0.1\times 2^{-3.8}$ & $2.323\times 10^{-7}$ & 1.89 & 0.46 & $3.573\times 10^{-7}$ & 1.84 & 0.28 & $2.323\times 10^{-7}$ & 1.89 & 0.45 & $2.150\times 10^{-7}$ & 1.80 & 0.65 \\
$0.1\times 2^{-4}$ & $1.791\times 10^{-7}$ & 1.88 & 0.52 & $2.765\times 10^{-7}$ & 1.85 & 0.29 & $1.791\times 10^{-7}$ & 1.88 & 0.51 & $1.676\times 10^{-7}$ & 1.80 & 0.80 \\
$0.1\times 2^{-5}$ & $4.622\times 10^{-8}$ & 1.95 & 0.90 & $7.268\times 10^{-8}$ & 1.93 & 0.46 & $4.622\times 10^{-8}$ & 1.95 & 0.91 & $4.476\times 10^{-8}$ & 1.91 & 1.41 \\
$0.1\times 2^{-6}$ & $1.174\times 10^{-8}$ & 1.98 & 1.73 & $1.862\times 10^{-8}$ & 1.96 & 0.85 & $1.174\times 10^{-8}$ & 1.98 & 1.78 & $1.155\times 10^{-8}$ & 1.95 & 2.46 \\
$0.1\times 2^{-7}$ & $2.958\times 10^{-9}$ & 1.99 & 3.30 & $4.713\times 10^{-9}$ & 1.98 & 1.50 & $2.958\times 10^{-9}$ & 1.99 & 3.47 & $2.932\times 10^{-9}$ & 1.98 & 4.78 \\
$0.1\times 2^{-8}$ & $7.424\times 10^{-10}$ & 1.99 & 6.50 & $1.185\times 10^{-9}$ & 1.99 & 3.25 & $7.424\times 10^{-10}$ & 1.99 & 6.86 & $7.369\times 10^{-10}$ & 1.99 & 9.49 \\
$0.1\times 2^{-9}$ & $1.860\times 10^{-10}$ & 2.00 & 13.06 & $2.972\times 10^{-10}$ & 2.00 & 5.95 & $1.860\times 10^{-10}$ & 2.00 & 13.34 & $1.829\times 10^{-10}$ & 2.01 & 18.83 \\
$0.1\times 2^{-10}$ & $4.654\times 10^{-11}$ & 2.00 & 26.40 & $7.442\times 10^{-11}$ & 2.00 & 12.13 & $4.654\times 10^{-11}$ & 2.00 & 26.75 & $4.388\times 10^{-11}$ & 2.06 & 40.82 \\
\bottomrule
\end{tabular}%
}
\end{table}

% Table \ref{tab:fixed-step-convergence-cpu-1} reports 

\begin{table}[htbp]
\centering
\caption{Vorticity $L^2$ errors, observed convergence rates, and CPU times for the tested schemes at $T=1$ under fixed time stepping. Here $\tau$ denotes the uniform time-step size.}
\label{tab:fixed-step-convergence-cpu}
\resizebox{\textwidth}{!}{%
\begin{tabular}{ccccccccccccc}
\toprule
 & \multicolumn{3}{c}{ETD-MS2} & \multicolumn{3}{c}{BDF2-mr-SAV} & \multicolumn{3}{c}{ETD-mr-SAV-MS2} & \multicolumn{3}{c}{ETD-mr-SAV-MS2-L} \\
\cmidrule(lr){2-4} \cmidrule(lr){5-7} \cmidrule(lr){8-10} \cmidrule(lr){11-13}
$\tau$ & Error & Order & CPU (s) & Error & Order & CPU (s) & Error & Order & CPU (s) & Error & Order & CPU (s) \\
\midrule
$0.1\times 2^{-2}$   & \texttt{NaN} & -- & 1.21 & $1.400\times 10^{-2}$ & -- & 0.70 & $3.068\times 10^{-3}$ & -- & 1.14 & $3.148\times 10^{-2}$ & -- & 1.83 \\
$0.1\times 2^{-3}$   & \texttt{NaN} & -- & 2.22 & $1.536\times 10^{-2}$ & \texttt{pre-asymp} & 1.33 & $3.127\times 10^{-3}$ & \texttt{pre-asymp} & 2.29 & $3.041\times 10^{-2}$ & \texttt{pre-asymp} & 3.37 \\
$0.1\times 2^{-3.2}$ & \texttt{NaN} & -- & 2.52 & $1.418\times 10^{-2}$ & \texttt{pre-asymp} & 1.58 & $3.114\times 10^{-3}$ & \texttt{pre-asymp} & 2.73 & $2.103\times 10^{-2}$ & \texttt{pre-asymp} & 4.06 \\
$0.1\times 2^{-3.3}$ & \texttt{NaN} & -- & 2.74 & $1.438\times 10^{-2}$ & \texttt{pre-asymp} & 1.65 & $3.235\times 10^{-3}$ & \texttt{pre-asymp} & 2.83 & $2.063\times 10^{-2}$ & \texttt{pre-asymp} & 4.36 \\
$0.1\times 2^{-3.4}$ & \texttt{NaN} & -- & 2.93 & $1.416\times 10^{-2}$ & \texttt{pre-asymp} & 1.61 & $3.200\times 10^{-3}$ & \texttt{pre-asymp} & 3.14 & $2.036\times 10^{-2}$ & \texttt{pre-asymp} & 4.58 \\
$0.1\times 2^{-3.5}$ & \texttt{NaN} & -- & 3.13 & $1.408\times 10^{-2}$ & \texttt{pre-asymp} & 1.87 & $2.886\times 10^{-3}$ & \texttt{pre-asymp} & 3.37 & $1.979\times 10^{-2}$ & \texttt{pre-asymp} & 5.38 \\
$0.1\times 2^{-3.6}$ & $3.453\times 10^{3}$ & -- & 3.24 & $1.402\times 10^{-2}$ & \texttt{pre-asymp} & 1.87 & $2.917\times 10^{-3}$ & \texttt{pre-asymp} & 3.64 & $1.985\times 10^{-2}$ & \texttt{pre-asymp} & 5.54 \\
$0.1\times 2^{-3.8}$ & $1.641\times 10^{-3}$ & \texttt{pre-asymp} & 3.11   & $1.038\times 10^{-2}$ & \texttt{pre-asymp} & 2.43   & $2.408\times 10^{-3}$ & \texttt{pre-asymp} & 3.98 & $2.764\times 10^{-3}$ & \texttt{pre-asymp} & 5.71 \\
$0.1\times 2^{-4}$   & $4.491\times 10^{-6}$ & \texttt{pre-asymp} & 3.80   & $4.480\times 10^{-5}$ & \texttt{pre-asymp} & 2.75   & $4.491\times 10^{-6}$ & \texttt{pre-asymp} & 4.41 & $4.475\times 10^{-6}$ & \texttt{pre-asymp} & 6.76 \\
$0.1\times 2^{-5}$   & $1.121\times 10^{-6}$ & 2.00               & 4.49   & $1.789\times 10^{-6}$ & \texttt{pre-asymp} & 5.44   & $1.121\times 10^{-6}$ & 2.00 & 6.20               & $1.118\times 10^{-6}$ & 2.00 & 8.05 \\
$0.1\times 2^{-6}$   & $2.803\times 10^{-7}$ & 2.00               & 9.21   & $4.480\times 10^{-7}$ & 2.00               & 10.55  & $2.803\times 10^{-7}$ & 2.00 & 14.39              & $2.799\times 10^{-7}$ & 2.00 & 15.66 \\
$0.1\times 2^{-7}$   & $7.013\times 10^{-8}$ & 2.00               & 18.30  & $1.121\times 10^{-7}$ & 2.00               & 21.16  & $7.013\times 10^{-8}$ & 2.00 & 27.14              & $7.008\times 10^{-8}$ & 2.00 & 31.23 \\
$0.1\times 2^{-8}$   & $1.754\times 10^{-8}$ & 2.00               & 35.76  & $2.806\times 10^{-8}$ & 2.00               & 41.83  & $1.754\times 10^{-8}$ & 2.00 & 52.66              & $1.752\times 10^{-8}$ & 2.00 & 62.68 \\
$0.1\times 2^{-9}$   & $4.386\times 10^{-9}$ & 2.00               & 74.76  & $7.017\times 10^{-9}$ & 2.00               & 86.28  & $4.386\times 10^{-9}$ & 2.00 & 90.05              & $4.356\times 10^{-9}$ & 2.01 & 125.14 \\
$0.1\times 2^{-10}$  & $1.097\times 10^{-9}$ & 2.00               & 166.69 & $1.755\times 10^{-9}$ & 2.00               & 168.78 & $1.097\times 10^{-9}$ & 2.00 & 177.25             & $1.064\times 10^{-9}$ & 2.03 & 256.84 \\
\bottomrule
\end{tabular}%
}
\end{table}

%Table~\ref{tab:fixed-step-convergence-cpu} presents the performance of four different schemes: ETD-mS2, ETD-mr-SAV-MS2, ETD-mr-SAV-MS2-L, and BDF2-mr-SAV \cite{han2025highly}. 
We have the following observations.
 (i) All the tested second-order schemes recover the expected second-order convergence in time after the coarse-step pre-asymptotic regime.
 (ii) Schemes reach second-order accuracy at larger time-step sizes at $T=0.1$ vs $T=1$. This suggests that the computation with larger terminal time might be more challenging.
 (iii) The classical ETD-MS2 scheme blows up numerically at $T=1$ for relatively large step sizes ($0.1\times 2^{-2}\ge \tau\ge 0.1\times 2^{-3.5}$). This is consistent with the lack of theoretical unconditional stability of the scheme.
 (iv) All the mr-SAV-based schemes produce numerical results with error of the order of $10^{-2}$ or smaller for the same relatively large steps sizes reported in (iii). This highlights the performance advantage of the mr-SAV-based schemes.
 (v) The classical ETD-MS2 scheme has essentially reached the theoretical 2nd-order convergence for steps sizes $\tau=0.1\times 2^{-3.2}, 0.1\times 2^{-3.4}$. Moreover, the relative error generated by the scheme at $T=0.1$ is on the order of $10^{-7}$ or smaller for the time step sizes tested. This is in sharp contrast to the numerical blow-up at $T=1$ for the same step sizes. Hence, even if a scheme produces a small error at a fixed terminal time and achieves the designed order of accuracy, its performance at later time could be problematic in the absence of theoretical long time stability guarantee.
 (vi) The mean-reverting schemes produce errors of the same order as the ETD-MS2 scheme at small time steps. Hence the mean-reverting SAV is able to stabilize the scheme without sacrifying the accuracy. 
 (vii) The ETD-mr-SAV schemes and the BDF2-mr-SAV scheme produce errors of the same order although the prefactors are smaller for the ETD schemes at small time steps. This is consistent with the intuition that the BDF2 scheme may be more diffusive than the ETD schemes.
 (viii) The CPU times are higher for the ETD-mr-SAV-MS2-L scheme due to the additional matrix-vector multiplications and inner products associated with the direct and inverse Laplace transform of the scalar auxiliary variable. Additional discussion about the computational complexities and related pros and cons can be found at the end of section 3.1. %illustrating the improved performance of the mean-reverting SAV treatment. As the time step is refined, the ETD-based mr-SAV schemes exhibit smaller error constants than the BDF2-mr-SAV scheme, and ETD-mr-SAV-MS2-L gives slightly smaller vorticity errors than ETD-mr-SAV-MS2 on the finest meshes. This improvement comes with a modest additional cost: both ETD-mr-SAV-MS2 and ETD-mr-SAV-MS2-L are dominated by FFT-type operations of order $O(N_s\log N_s)$ per time step, where $N_s$ denotes the number of spatial grid points, while ETD-mr-SAV-MS2-L additionally evaluates the auxiliary variable through a Laplace-inversion procedure. With a fixed number $M$ of Talbot nodes, this adds an $O(MN_s)$ contribution from frequency-space inner products, increasing the computational constant and explaining the slightly larger CPU times. Nevertheless, the Laplace-based formulation yields a linearly implicit auxiliary-variable update that is theoretically uniquely solvable, so ETD-mr-SAV-MS2-L trades a small loss in efficiency for a stronger solvability guarantee and slightly improved accuracy in this test.
 %We also observe similar phenomena at $T=0.1$ with the interesting difference that the error produced by the classical ETD-MS2 scheme is of the order of $10^{-7}$ for $\tau=0.1\times 2^{-3}, 0.1\times 2^{-3.2}, 0.1\times 2^{-2.4}$. This is in sharp constrast to the NaN at $T=1$.

Next, we examine the variable-step case. Starting from a uniform partition with $N$ time steps, we perturb each time step by a random relative amplitude of $15\%$ and subtract the mean perturbation so that the resulting time-step sequence still reaches the final time $T=1$. The total number of time steps is chosen as $N=10\times 2^k$, $k=4,5,\ldots,10$.

The variable-step convergence results for the ETD-mr-SAV-MS2-L scheme are reported in Table~\ref{tab:variable-step-etd-mrsav-ms2-l}. The observed rates remain close to two throughout the refinement sequence. This confirms that the proposed ETD-mr-SAV-MS2-L method preserves its second-order temporal accuracy under moderately perturbed time-step sequences, demonstrating the robustness of the method with respect to variable time stepping.

\begin{table}[htbp]
\centering
\caption{Vorticity $L^2$ errors and observed convergence rates computed by ETD-mr-SAV-MS2-L at $T=1$ under a $15\%$ perturbed variable-step sequence. Here $N$ denotes the total number of time steps.}
\label{tab:variable-step-etd-mrsav-ms2-l}
\begin{tabular}{@{} l *{7}{c} @{}}
\toprule
$N$ & $2^{4}\times 10$ & $2^{5}\times 10$ & $2^{6}\times 10$ & $2^{7}\times 10$ & $2^{8}\times 10$ & $2^{9}\times 10$ & $2^{10}\times 10$ \\
\midrule
Error & 4.500e-6 & 1.130e-6 & 2.850e-7 & 7.090e-8 & 1.770e-8 & 4.420e-9 & 1.080e-9 \\
Rate & -- & 1.99 & 1.99 & 2.01 & 2.00 & 2.00 & 2.03 \\
\bottomrule
\end{tabular}
\end{table}

We now demonstrate that the mean-reverting mechanism is not only theoretically important to our uniform-in-time bound, it can also enhance the performance by suppressing  numerical drifts. 
\begin{exm}\label{exm:mean_reverting_test}[Mean-reversion parameter effect]
In this example, we test the effect of the mean-reversion parameter $\gamma$. When $\gamma=0$, we recover the so-called ZEC scheme \cite{zhang2024unified} or the standard SAV scheme for NSE equations \cite{li2022new}. Notice that a positive mean-reversion parameter $\gamma$ is not only crucial to the uniform-in-time stability, it also enhances the computational result in this numerical experiment.
We consider the vorticity--streamfunction equation on the periodic domain $\Omega=(0,2\pi)^2$ with viscosity $\nu=1/40$. 
The external forcing is chosen as $f=4\cos(4y)$, corresponding to a Kolmogorov-type forcing. 
In order to test the effect of the mean-reverting mechanism away from a simple steady profile, 
the initial data are generated from the smooth isotropic perturbation streamfunction
\begin{equation*}
\psi_0(x,y)=\frac{1}{4} \sum_{k,m=1}^{10} \frac{1}{(k^2+m^2)^{3/2}}
% \sum_{\substack{\bm k\in\mathbb{Z}^2\\0<|\bm k|\le 10}}
% \frac{1}{|\bm k|^{3/2}}
\bigl(\cos(kx)+\sin(kx)\bigr)\bigl(\cos(my)+\sin(my)\bigr).
\end{equation*}
The spatial discretization uses a $256\times256$ Fourier grid. We compare the same second-order ETD-mr-SAV-MS2-L scheme with four mean-reverting parameters, $\gamma=1000,100,10,0$, under the time step $\tau=10^{-3}$. 
A reference solution is computed by the ETDRK4 scheme with the finer time step $\tau_{\rm ref}=5\times10^{-4}$.
\end{exm}

Figure~\ref{fig:mean_reverting_diagnostics} presents the relative $L^2$ error of the vorticity and the evolution of enstrophy for different choices of $\gamma$. 
Table~\ref{tab:mean_reverting_error_comparison} reports the corresponding relative errors at selected time instants. 
The comparison shows a clear stabilizing effect of the mean-reverting correction. When $\gamma$ is large, 
the numerical trajectory remains close to the ETDRK4 reference solution over the observed time interval. 
As $\gamma$ is reduced, this restoring effect becomes weaker, and the vorticity error increases accordingly. 
The case $\gamma=0$, which removes the mean-reverting correction, gives the largest deviation (more than $38\%$ at $t=20$) from the reference solution, 
while the intermediate choices $\gamma=100$ and $\gamma=10$ provide a gradual transition between the strongly mean-reverting and non-mean-reverting regimes.

The enstrophy diagnostics give a more nuanced message. Notice that the zero mean-reverting computation also tracks the reference enstrophy accurately in this test (relative error less than $0.5\%$ at $t=20$). 
Hence, the enstrophy error is not necessarily proportional to the full vorticity error. 
In particular, a computation may preserve an integral quantity such as enstrophy relatively well while still accumulating significant relative error in the vorticity field. Thus, enstrophy-related quantities are useful supplementary stability diagnostics, but they should not be used as standalone error indicators for adaptive time-stepping. Overall, these results support the role of the mean-reverting term as a practical stabilizing mechanism, not merely as a technical device in the theoretical analysis.

\begin{figure}[htbp]
\centering
\includegraphics[width=0.9\linewidth]{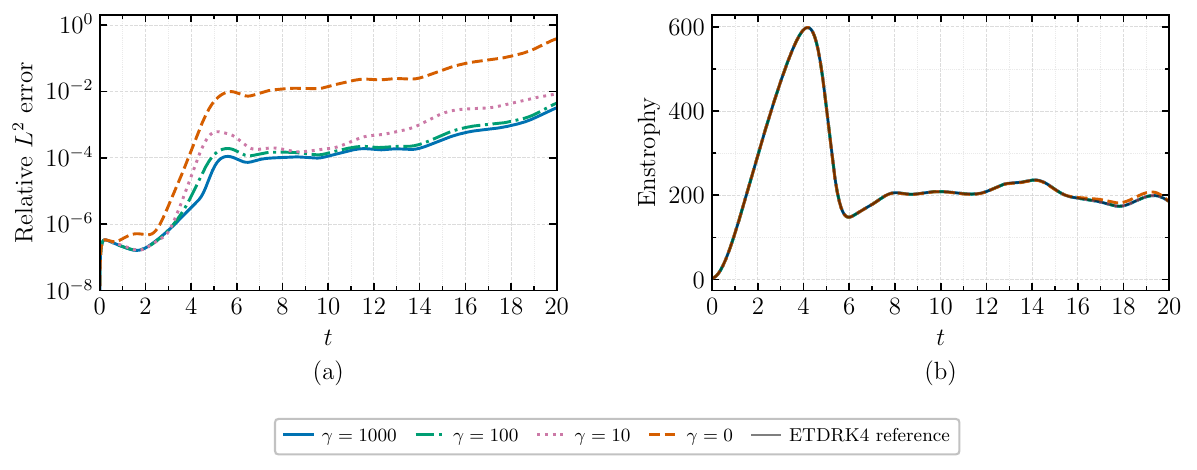}
\caption{Effect of the mean-reverting correction on the forced periodic vorticity problem. (a) Relative $L^2$ error of the vorticity with respect to the ETDRK4 reference solution.  (b) Enstrophy evolution for the reference solution and ETD-mr-SAV-MS2-L computations with $\gamma=1000,100,10,0$.}
\label{fig:mean_reverting_diagnostics}
\end{figure}

\begin{table}[htbp]
\centering
\caption{Comparison of relative vorticity errors $e_\omega$ and enstrophy errors $e_{\mathcal{E}}$ at selected times under different mean-reverting parameters. All errors are measured relative to the ETDRK4 reference solution.}
\label{tab:mean_reverting_error_comparison}
\resizebox{0.9\textwidth}{!}{%
\begin{tabular}{ccccccccc}
\toprule
\multirow{2}{*}{$t$} & \multicolumn{2}{c}{$\gamma=1000$} & \multicolumn{2}{c}{$\gamma=100$} & \multicolumn{2}{c}{$\gamma=10$} & \multicolumn{2}{c}{$\gamma=0$} \\
\cmidrule(lr){2-3} \cmidrule(lr){4-5} \cmidrule(lr){6-7} \cmidrule(lr){8-9}
 & $e_\omega$ & $e_{\mathcal{E}}$ & $e_\omega$ & $e_{\mathcal{E}}$ & $e_\omega$ & $e_{\mathcal{E}}$ & $e_\omega$ & $e_{\mathcal{E}}$ \\
\midrule
5  & $\mathbf{5.06\times 10^{-5}}$ & $5.36\times 10^{-5}$          & $1.22\times 10^{-4}$ & $\mathbf{1.93\times 10^{-5}}$ & $5.85\times 10^{-4}$ & $2.70\times 10^{-4}$ & $4.92\times 10^{-3}$ & $1.87\times 10^{-3}$ \\
10 & $\mathbf{1.12\times 10^{-4}}$ & $\mathbf{1.40\times 10^{-5}}$ & $1.38\times 10^{-4}$ & $1.48\times 10^{-5}$          & $1.86\times 10^{-4}$ & $1.41\times 10^{-5}$ & $1.40\times 10^{-2}$ & $1.58\times 10^{-3}$ \\
15 & $\mathbf{3.49\times 10^{-4}}$ & $\mathbf{9.20\times 10^{-6}}$ & $4.88\times 10^{-4}$ & $2.32\times 10^{-5}$          & $2.15\times 10^{-3}$ & $2.75\times 10^{-4}$ & $4.38\times 10^{-2}$ & $3.44\times 10^{-3}$ \\
20 & $\mathbf{3.19\times 10^{-3}}$ & $1.99\times 10^{-4}$          & $4.41\times 10^{-3}$ & $\mathbf{2.58\times 10^{-5}}$ & $8.43\times 10^{-3}$ & $2.02\times 10^{-3}$ & $3.88\times 10^{-1}$ & $4.86\times 10^{-3}$ \\
\bottomrule
\end{tabular}%
}
\end{table}

\subsection{Long-time stability}\label{longtimestability}

% \subsection{Kolmogorov flow}
\begin{exm}\label{exm:kolmogorov}[long time stability]
In this example, we simulate the Kolmogorov flow to compare the long-time performance of different schemes. We consider the two-dimensional periodic NSE on $(0,2\pi)^2$ in vorticity--streamfunction form \eqref{eqn:vs_ns}, with external forcing
\begin{equation}
f(x,y;t)=m\cos(my),
\end{equation}
following \cite{armbruster1996symmetries}. The corresponding steady-state solution (the basic Kolmogorov flow) is given by the streamfunction $\psi(x,y) = -\frac{1}{\nu m^3}\cos(my)$. 
%We define the Reynolds number as $\R = \frac{L v_{\text{avg}}}{\nu}$, where $L=2\pi$ is the domain length and $v_{\text{avg}}$ is the averaged velocity
%\[
%v_{\text{avg}} = \frac{1}{TL}\sqrt{\int_0^{T}\Vert \omega(t) \Vert^2 \, dt},
%\]
%with $T$ denoting the averaging time interval. The basic flow is stable for sufficiently small $\R$; when $\R$ exceeds a critical value, the flow loses stability and nontrivial time-dependent structures may emerge.
\end{exm}

We first investigate the long-time stability of the Kolmogorov forcing with $m=2$, $\nu = 1/20$, and $\gamma = 1000$. The initial streamfunction is taken as a perturbation of the steady state presented above, given by
\begin{equation*}
\psi_0(x,y) = -\frac{1}{\nu m^3}\cos(my) + 0.001 \cos(mx)\cos(my).
\end{equation*}

Figure~\ref{fig:long_time_stability_etdms} presents the evolution of the \(L^2\) norm of vorticity computed up to the final time \(T = 1000\) using the ETD-mr-SAV-MS2-L scheme with uniform time steps \(\tau=0.01,\,0.005,\,0.0025\). Over the full simulation interval, the \(L^2\) norm remains uniformly bounded, which is consistent with the theoretical stability results established in Section~3. When the time-step is not small, say $\tau=0.01$, the long-time statistics of the palinstrophy (one-half of the $L^2$ norm of the gradient of the vorticity) seems far-off from the behavior for smaller time-steps although it is uniformly bounded in time.

\begin{figure}[htbp]
\centering
\includegraphics[width=\linewidth]{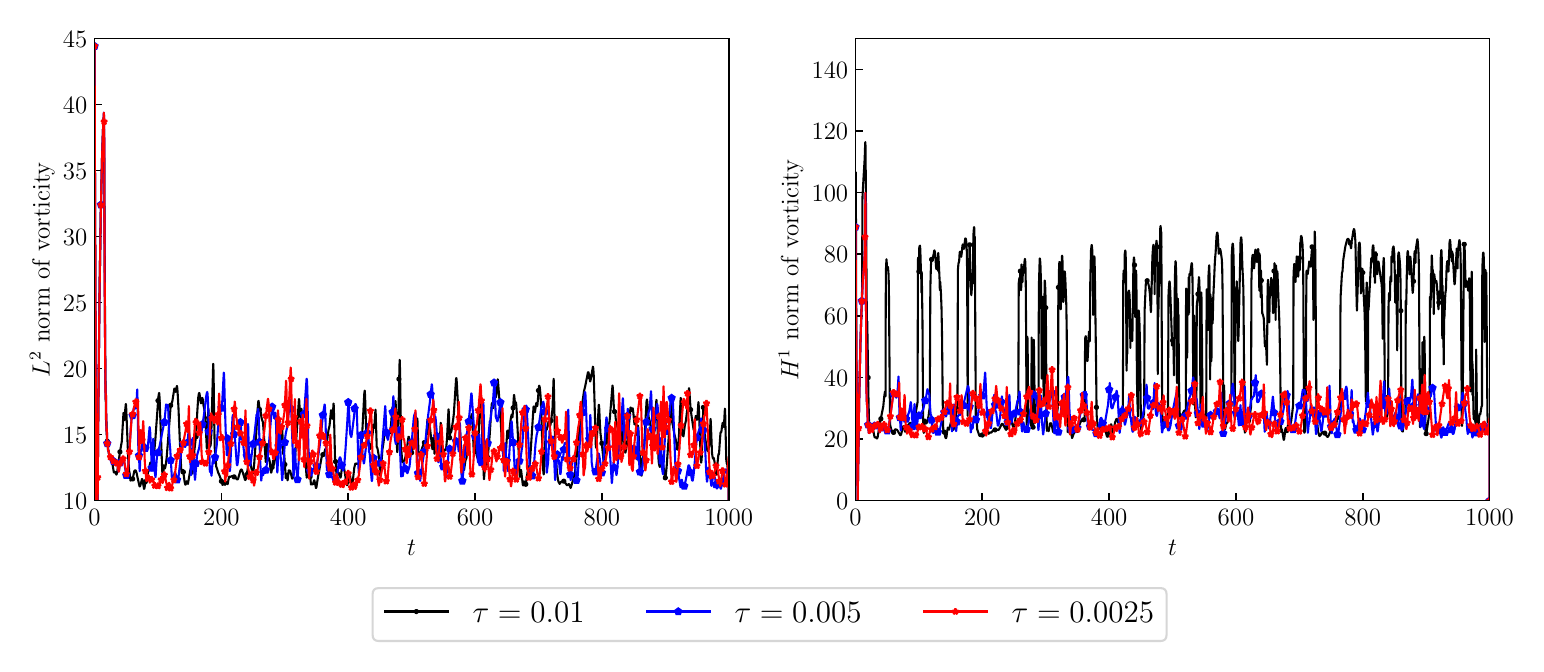}
\caption{Evolution of the vorticity $L^2$ norm computed by the ETD-mr-SAV-MS2-L and ETD-mr-SAV-MS12-L schemes for $\nu=1/20$ and $m=2$.}
\label{fig:long_time_stability_etdms}
\end{figure}

We next simulate the Kolmogorov flow with parameters $m=4$, $\nu = 1/40$ and $\gamma = 1000$. The initial streamfunction is taken as a perturbation of the steady state presented above,
\begin{equation}
\psi_0(x,y) = -\frac{1}{\nu m^3}\cos(my) + 0.001 \cos(mx)\cos(my).
\end{equation}
The final time is set to $T=10000$. We employ a fixed time step $\tau = 5 \times 10^{-4}$ and adopt the ETD-mr-SAV-MS2-L scheme and the ETD-mr-SAV-MS2 scheme proposed in \cite{WangWangZhang2026} for comparison. %Spatial discretization uses $256$ Fourier modes.

\ignore{
\begin{table}[htbp]
\centering
\caption{Reynolds numbers obtained by ETD-mr-SAV-MS2, ETD-mr-gSAV-MS2, and Ada ETD-mr-gSAV-MS2 for $\nu=1/40$ and $m=4$.}
\label{tab:rey_num}
\begin{tabular}{|c|c|c|c|}
\hline
Method & ETD-mr-SAV-ms2 & ETD-mr-gSAV-ms2 & Ada ETD-mr-gSAV-ms2 \\ \hline
Re & 43.52 & 43.43 & 43.61 \\ \hline
\end{tabular}
\end{table}

We first compute the Reynolds numbers associated with the trajectories generated by different schemes; see Table~\ref{tab:rey_num}. The results are consistent across methods. Moreover, the solution exhibits temporal intermittency/bursting behavior, manifested as occasional spikes in enstrophy, even at this moderate Reynolds number, consistent with earlier observations in \cite{armbruster1996symmetries,han2025highly}.
}

The time evolution of enstrophy (one-half of the $L^2$ norm of the vorticity squred) and palinstrophy (one-half of the $L^2$ norm of the gradient of the vorticity squared), calculated by the ETD-mr-SAV-MS2-L method and the ETD-mr-SAV-MS2 method, are displayed in Figure~\ref{fig:L2normevol} and Figure~\ref{fig:gL2normevol}, respectively.
Although individual trajectories diverge over long time, they remain uniformly bounded for all time, which further illustrates long-time stability. The separation of trajectories over long times is likely due to the intrinsic instability of the dynamics (e.g., positive Lyapunov exponents).

\begin{figure}[htbp]
\centering
\includegraphics[width=\linewidth]{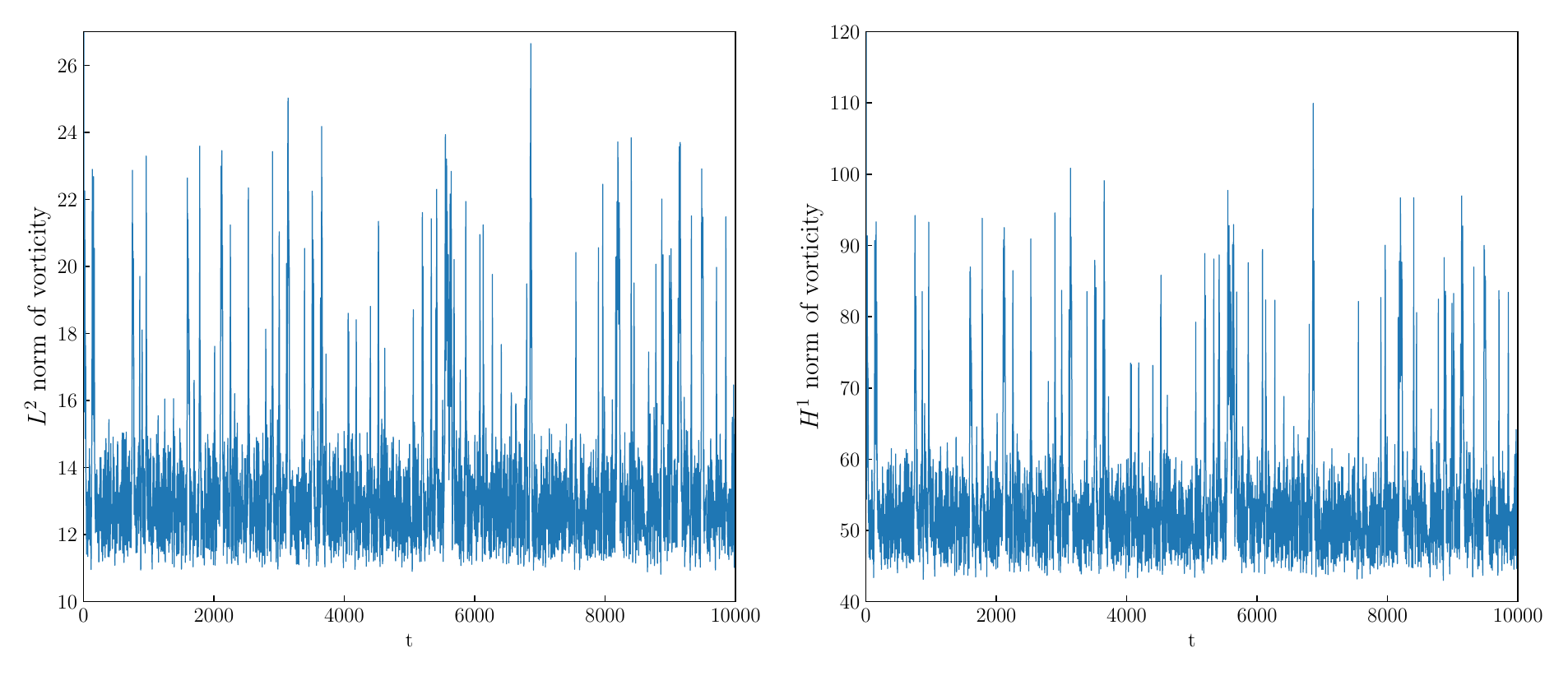}
\caption{Enstrophy (one-half of the $L^2$ norm of the vorticity squared) and palinstrophy (one-half of the $L^2$ norm of the gradient of the vorticity squared) as functions of time, computed using ETD-mr-SAV-MS2-L scheme with $\nu=1/40$, $m=4$, and $\tau=5\times 10^{-4}$.}
\label{fig:L2normevol}
\end{figure}

\begin{figure}[htbp]
\centering
\includegraphics[width=\linewidth]{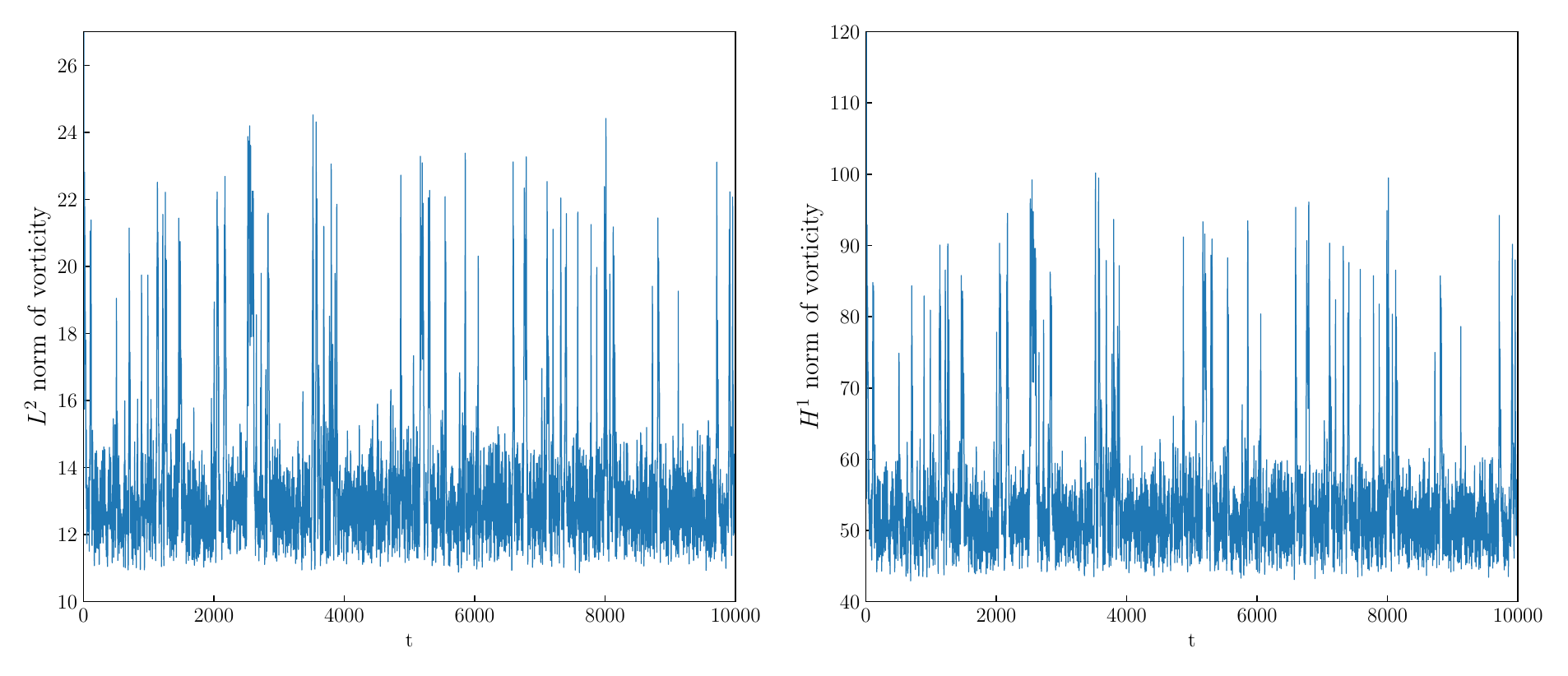}
\caption{Enstrophy (one-half of the $L^2$ norm of the vorticity squared) and palinstrophy (one-half of the $L^2$ norm of the gradient of the vorticity squared) as function of time, computed using ETD-mr-SAV-MS2 scheme with $\nu=1/40$, $m=4$, and $\tau=5\times 10^{-4}$.}
\label{fig:gL2normevol}
\end{figure}

\begin{figure}[htbp]
\centering
\includegraphics[width=0.6\linewidth]{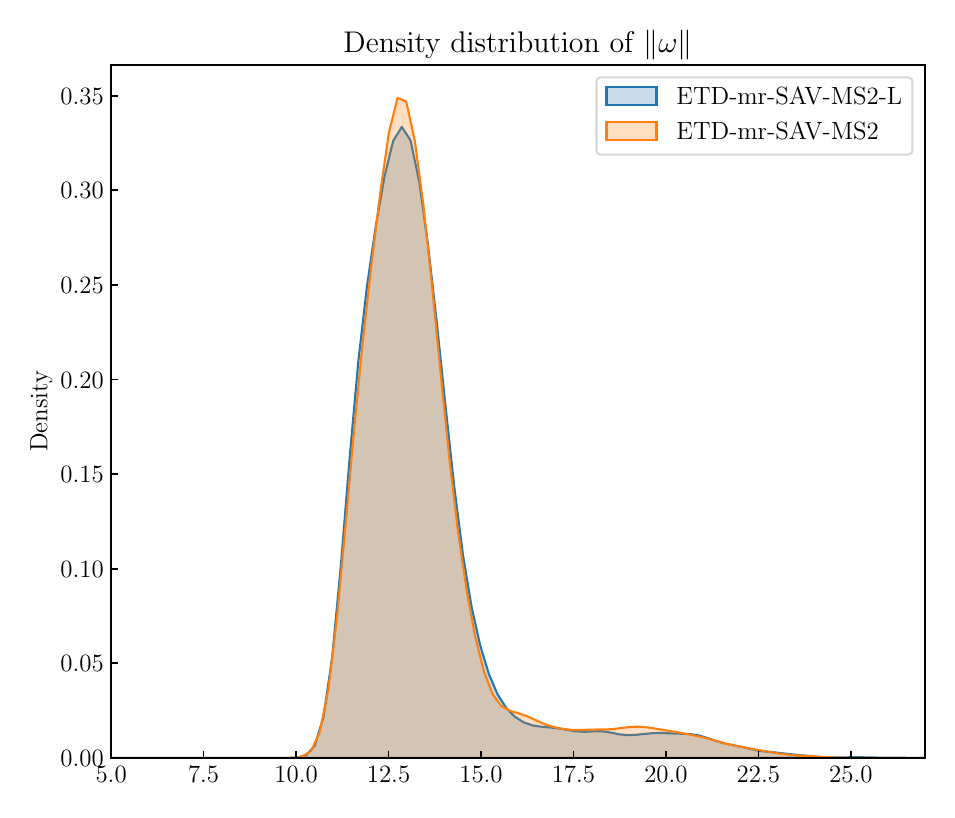}
\caption{Probability density functions of enstrophy obtained by ETD-mr-SAV-MS2-L, ETD-mr-SAV-MS2 for $\nu=1/40$, $m=4$, and $\tau=5\times 10^{-4}$.}
\label{fig:l2_norm_dist_2}
\end{figure}

To assess long-time statistical consistency, we consider the probability density function (PDF) of the enstrophy and compute the total variation distance between the enstrophy distributions obtained from different schemes using the same uniform temporal sampling. The PDFs are nearly indistinguishable (Figure~\ref{fig:l2_norm_dist_2}), and the total variation distances are small (Table~\ref{tab:tv_comparison_2}).
These observations provide evidence that these two schemes produce statistically consistent long-term behavior and are able to capture long-term statistics of the 2D NSE irrespective of the separation of trajectories \cite{foias2001navier,majda2006nonlinear,wang2012efficient}.
The fat tail in PDF is consistent with the observed bursting and intermittent behavior.

\begin{table}[htbp]
\centering
\caption{Total variation distance between enstrophy distributions obtained by ETD-mr-SAV-MS2-L and ETD-mr-SAV-MS2 for $\nu=1/40$, $m=4$, and $\tau=5\times 10^{-4}$.}
\label{tab:tv_comparison_2}
\begin{tabular}{@{}ccc@{}}
\toprule
\multicolumn{1}{c}{$L^2$ Norm of Vorticity} & \multicolumn{1}{c}{Total Variation Distance}  & \multicolumn{1}{c}{Relative $L^1$ Error} \\
\midrule
$ \Vert \omega \Vert < 17.5$  & $0.0199218$ & $4.243069\%$ \\
$ \Vert \omega \Vert \geq 17.5 $  & $0.0038525$ & $12.647748\%$ \\
$ -\infty \le \Vert \omega \Vert \le \infty$  & $0.0237743$ & $4.755107\%$\\
\bottomrule
\end{tabular}
\end{table}

\subsection{Adaptive Strategies}
In this section, we examine the performance of the proposed adaptive time-step method and the VSVO method separately.

\subsubsection{Adaptive Time-Step Method Test}

To facilitate a fair comparison with the fixed-step-size scheme, we adopt in this numerical experiment the identical external force and parameter configurations as those used in Example \ref{exm:kolmogorov}. We first investigate the stability performance of the adaptive time-step ETD-mr-SAV-MS12-L scheme under Kolmogorov forcing with \(m = 2\), \(\nu = 1/20\), and \(\gamma = 1000\), where the adaptive parameters are set as
\begin{equation}\label{adaptive_set}
\rho = 0.9,\quad \text{tol}_{\omega} = \text{tol}_{q} = 10^{-4},\quad\tau_{\min} = 1\times 10^{-5},\quad\tau_{\max} = 1\times10^{-2}.
\end{equation}
The initial stream function is set to be the same perturbation of the steady state as in Example \ref{exm:kolmogorov}, given by
\begin{equation*}
\psi_0(x,y) = -\frac{1}{\nu m^3}\cos(my) + 0.001 \cos(mx)\cos(my).
\end{equation*}

Figure~\ref{fig:long_time_stability_adp} shows the evolution of the \(L^2\)-norm and \(H^1\)-norm of the vorticity up to the final time \(T = 1000\), computed using the adaptive ETD-mr-SAV-MS12-L scheme and the ETD-mr-SAV-MS2-L scheme with a uniform time step \(\tau = 0.0025\). It is observed that, in comparison with the fixed time-step scheme, the adaptive algorithm can equally guarantee the uniform boundedness of the $L^2$-norm. This demonstrates that the theoretical stability results established in Section~3 remain valid for the case of variable time steps.

\begin{figure}[htbp]
\centering
\includegraphics[width=\linewidth]{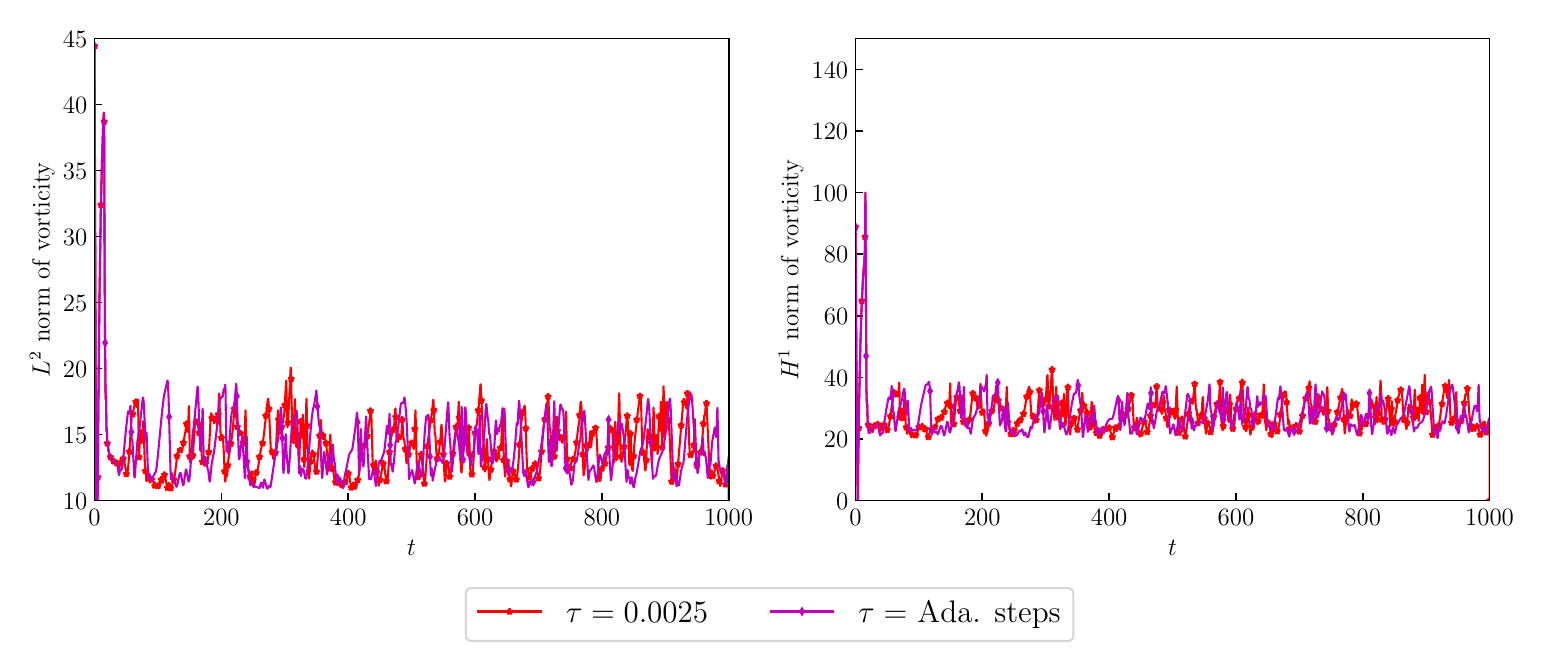}
\caption{Evolution of the vorticity $L^2$ norm computed by the ETD-mr-SAV-MS2-L and ETD-mr-SAV-MS12-L schemes for $\nu=1/20$ and $m=2$.}
\label{fig:long_time_stability_adp}
\end{figure}

To assess the short-time performance, we adopt the parameter settings from the first Kolmogorov test with $m=2$ and $\nu=1/20$, and choose the initial data as the flow profile at a random time between $20$ and $60$ from that simulation so that the trajectory starts near the attractor. Figure~\ref{fig:long_time_adap} illustrates the time step size and the corresponding errors (computed via the ETD-mr-SAV-MS12 scheme) over the time interval \( t \in [0,50] \), both presented as functions of time. Specifically, Figures~\ref{fig:long_time_adap} (a), (b), and (c) display the adaptive time step size, along with the absolute error in the scalar auxiliary variable and the relative $L^2$ errors of vorticity, which are used to control the time step. It can be observed that the adaptive algorithm efficiently adjusts the time step to ensure the errors remain below the prescribed tolerances.

To further characterize the actual evolution of errors, the ETDRK4 scheme with a finer time step is employed to produce a reference solution. The relative $ L^2 $-errors (using the reference solution as the numerical truth) are presented in Figure~\ref{fig:long_time_adap} (d). Even at the final simulation time $ t=50 $, these errors remain well-below $ 10^{-3} $ despite the accumulation of local errors; this demonstrates the effectiveness of the embedded adaptive pair ETD-mr-SAV-MS12-L scheme. 

\begin{figure}[htbp]
    \centering
    \includegraphics[width=0.8\linewidth]{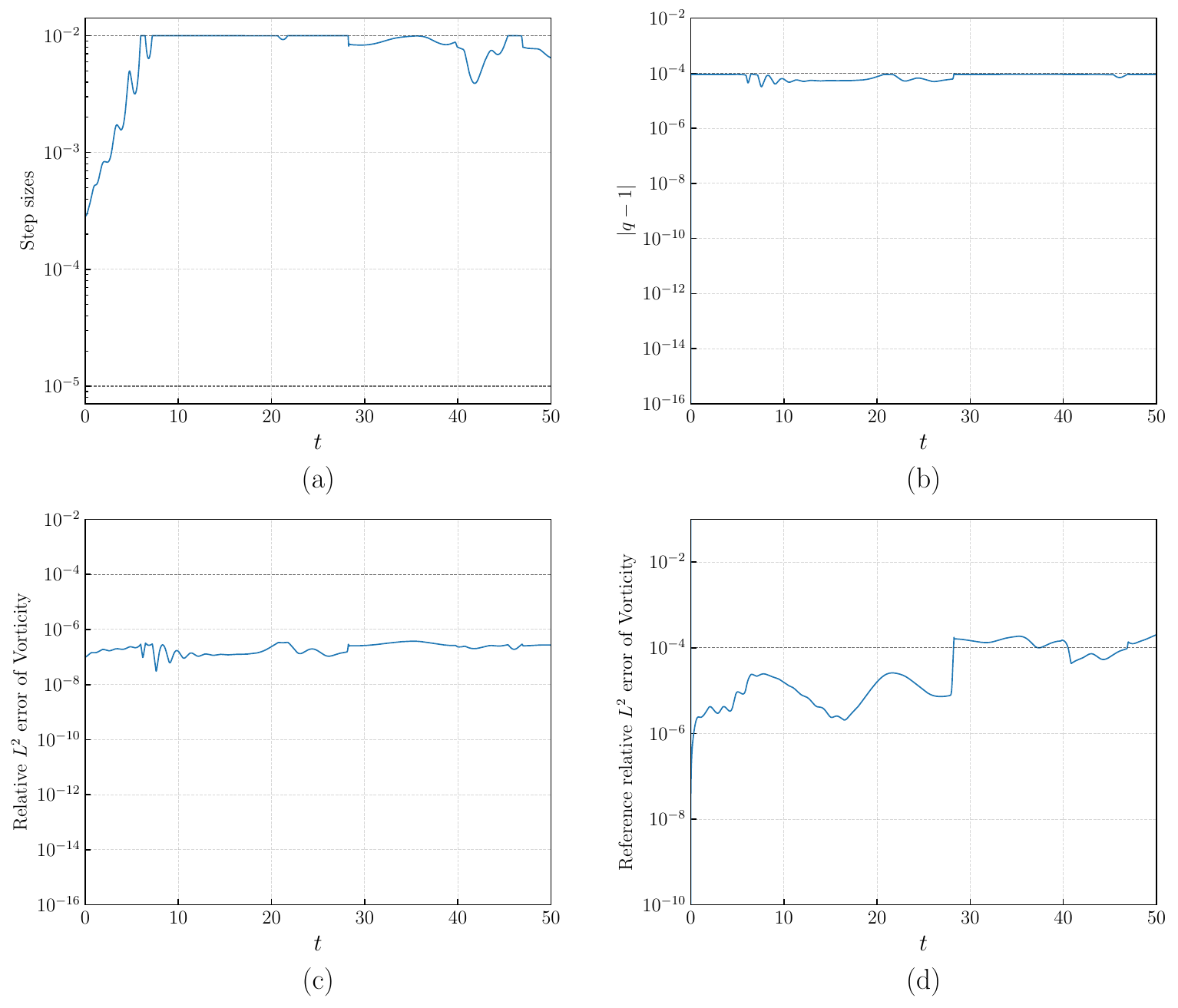}
    \caption{(a) Adaptive time step sizes as a function of time. (b) Absolute value of the auxiliary varialbe as a function of time. (c) Relative $L^2$ error of vorticity computed in the ETD-mr-SAV-MS12-L scheme and (d) Reference relative $L^2$ error computed via the ETD-mr-SAV-MS12-L scheme and ETDRK4 scheme.}
    \label{fig:long_time_adap}
\end{figure}

Next, we adopt the parameter settings from the second Kolmogorov test in Example \ref{exm:kolmogorov}, i.e., \(m=4\) and \(\nu=1/40\), together with the same initial streamfunction and spatial discretization configuration, to examine the very long time (up to $T=10^4$) performance of the adaptive time-stepping scheme.

The time evolution of the enstrophy and palinstrophy, computed using the ETD-mr-SAV-MS12-L method up to \(T = 10000\), is displayed in Figure \eqref{fig:L2normevol_adp}. It is observed that the trajectories remain uniformly bounded over long-time simulation, which further demonstrates the long-time stability of the proposed schemes. 
We also considered the probability density function (PDF) of the enstrophy and calculated the total variation distance between the enstrophy distributions obtained from the fixed time-step ETD-mr-SAV-MS2 scheme with $\tau = 5\times10^{-4}$ using the same uniform time sampling. 

As shown in Figure \ref{fig:l2_norm_dist_adp}, the PDFs obtained using the adaptive step-size and fixed step-size strategies are consistent. It can also be found in Table \ref{tab:tv_comparison} that the total variation distance is  small. This indicates that in long-term simulations, the adaptive strategy shows statistical consistency with the fixed step-size, demonstrating the effectiveness of the adaptive method.

\begin{figure}[htbp]
\centering
\includegraphics[width=\linewidth]{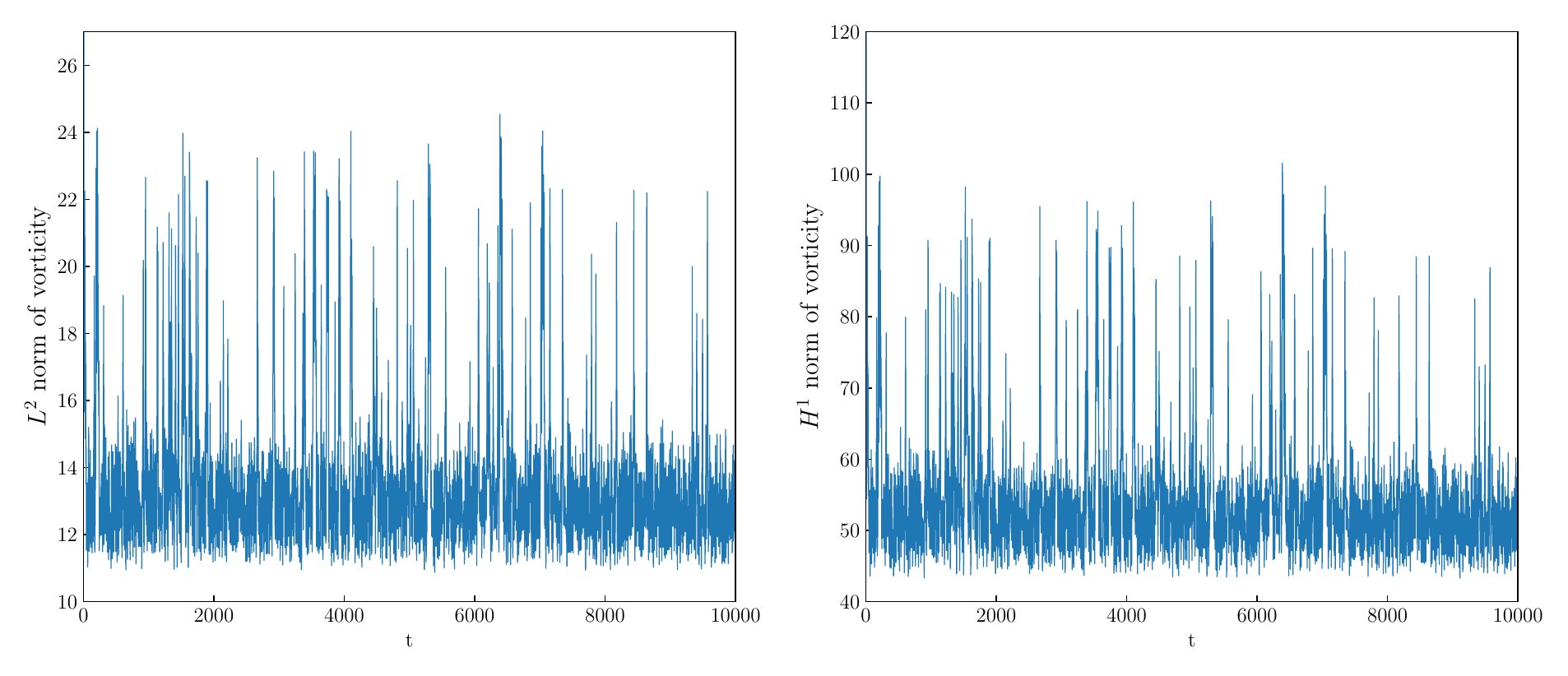}
\caption{Enstrophy ($L^2$ norm of vorticity) and palinstrophy ($H^1$ norm of vorticity) as function of time, computed using ETD-mr-SAV-MS12-L scheme with $\nu=1/40$ and $m=4$. }
\label{fig:L2normevol_adp}
\end{figure}

\begin{figure}[htbp]
\centering
\includegraphics[width=0.6\linewidth]{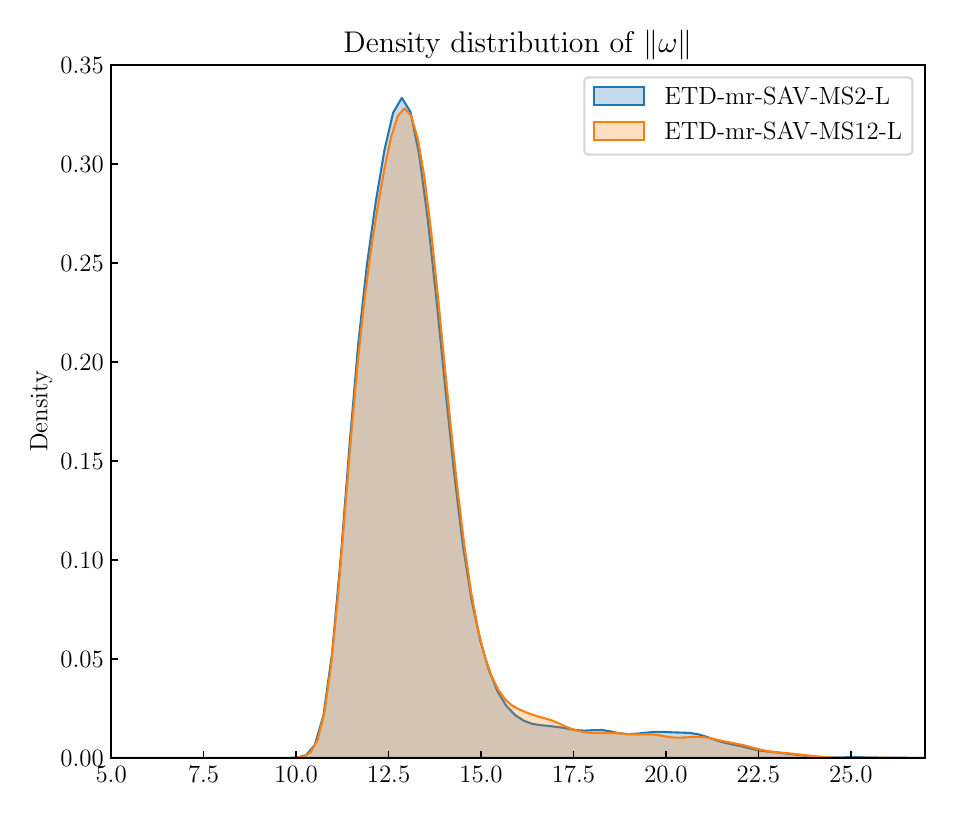}
\caption{Probability density functions of enstrophy obtained by ETD-mr-SAV-MS2-L, ETD-mr-SAV-MS12-L for $\nu=1/40$ and $m=4$.}
\label{fig:l2_norm_dist_adp}
\end{figure}

\begin{table}[htbp]
\centering
\caption{Total variation distance between enstrophy distributions obtained by ETD-mr-SAV-MS2-L, ETD-mr-SAV-MS12-L for $\nu=1/40$ and $m=4$.}
\label{tab:tv_comparison}
\begin{tabular}{@{}ccc@{}}
\toprule
\multicolumn{1}{c}{$L^2$ Norm of Vorticity} & \multicolumn{1}{c}{Total Variation Distance}  & \multicolumn{1}{c}{Relative $L^1$ Error} \\
\midrule
$ \Vert \omega \Vert < 17.5$  & $0.0156132$ & $3.325381\%$ \\
$ \Vert \omega \Vert \geq 17.5 $  & $0.0029722$ & $9.835980\%$ \\
$ -\infty \le \Vert \omega \Vert \le \infty$  & $0.0185854$ & $3.718358\%$\\
\bottomrule
\end{tabular}
\end{table}

Furthermore, we provide additional evidence of the advantages of adaptive time stepping, as illustrated in Figures~\ref{fig:adaptive_step_k}. Specifically, Figure~\ref{fig:adaptive_step_k} (a), (b), and (c) shows that the adaptive scheme automatically selects smaller time steps when bursting events occur (corresponding to spikes in the enstrophy), improving temporal resolution during rapid transients. In addition, to illustrate the correlation between the time step size and the enstrophy, we calculated their correlation coefficient (PCC). The mathematical expression of PCC is given by 
$$
PCC(x,y) = \frac{\sum (x - m_x) (y - m_y)}{\sqrt{\sum (x - m_x)^2 \sum (y - m_y)^2}}.
$$ 
where $m_x$, $m_y$ are the mean values of $x$ and $y$, respectively. The computed PCC value is -0.8123, % between the time step size and enstrophy,
suggesting a strong negative correlation. Specifically, as the enstrophy increases, the time step size decreases accordingly, this property is clearly reflected in Figure~\ref{fig:adaptive_step_k} (c). Figure~\ref{fig:adaptive_step_k} (d) further quantifies efficiency: the total number of time steps required by the adaptive algorithm is approximately one-sixth of that required by the fixed-step simulation. This demonstrates that the adaptive strategy achieves a favorable balance between accuracy and computational cost.

\begin{figure}[htbp]
    \centering
    \includegraphics[width=\linewidth]{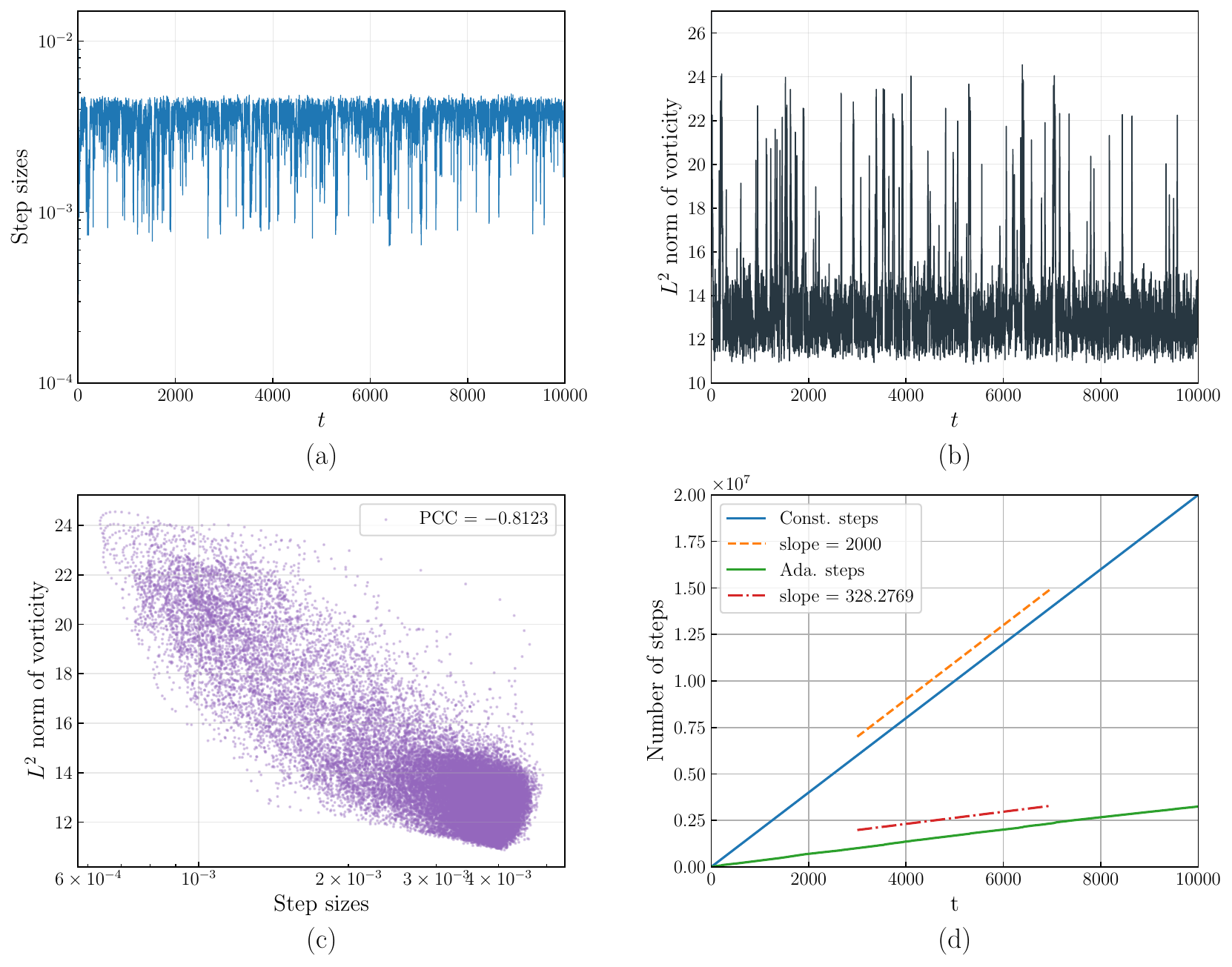}
    \caption{(a) Adaptive time step sizes as a function of time. (b) Enstrophy ($L^2$ norm of vorticity) as a function of time. (c) Correlation between step sizes and enstrophy ($L^2$ norm of vorticity) (PCC = $-0.8123$). (d) Number of steps over time for constant steps and adaptive steps.}
    \label{fig:adaptive_step_k}
\end{figure}

\subsubsection{VSVO Method Test}

Here we assess the performance of the adaptive VSVO method on a problem with multiple time scales, and demonstrate the superiority of the adaptive VSVO strategy in this case. We follow the classical stepsize-order control strategy described in \cite{li2020adaptive,decaria2021embedded,decaria2021variable}.

\begin{exm}\label{exm:adaptive}
In this experiment, we adopt the vorticity--streamfunction formulation \eqref{eqn:vs_ns} on the periodic domain $(0,2\pi)^2$. The external forcing, related to the classical Taylor-Green vortex problem, is multi-scale in time, and takes the form 
\begin{equation}
f(x,y,t)= -(2\nu F(t) + F'(t)) \cos(x) \cos(y)
\end{equation}
where $F(t)$ is chosen as the classical square-wave function in the form
\begin{equation}
F(t)= -\tanh \left(100 \sin\left(\frac{\pi}{5}\left(t + \frac{5}{2}\right)\right)\right) + 2.
\end{equation}
The corresponding exact solution is given by 
\begin{equation}
    \omega(x,y;t) = -2 F(t) \cos(x)\cos(y)
\end{equation}
which also exhibits the slow--fast temporal behavior characteristic of the forcing term.
\end{exm}

In the numerical simulation, we set $\nu = 0.05$ and $\gamma = 1000$, and employ $256$ Fourier modes for spatial discretizations. The VSVO scheme is adopted in the simulation, with its parameters specified as
\begin{equation}\label{eqn:vsvo_par}
    \text{tol} = 10^{-4},\quad \tau_{\min} = 10^{-5},\quad \tau_{\max} = 2\times 10^{-2}.
\end{equation}
For comparison, we also implement the ETD-mr-SAV-MS2-L scheme with a constant time step $\tau = 10^{-2}$.

Figure~\ref{fig:VSVO_1} displays the $L^2$ norm of vorticity and the $L^2$ error of vorticity as functions of time. Meanwhile, the adaptive time steps and adaptive order selections are presented in Figure~\ref{fig:VSVO_2}. The results demonstrate that the two numerical strategies yield nearly identical evolutions for the $L^2$ norm of vorticity. However, the fixed-step ETD-mr-SAV-MS2-L would incur an error of the order of $10^{-1}$ near the sharp temporal transition of the solution while the  VSVO method maintains an error of the order of $10^{-7}$ or less for all time. This demonstrates the superior performance of the VSVO strategy in capturing the transition behaviors. 
%More specifically, when the external force is nearly constant in time, the adaptive time step remains close to the maximum allowable value $\tau_{\max}=2\times 10^{-2}$, and the scheme employs the first-order solution for evolution. When the external force varies rapidly, the time steps decrease automatically, and the second-order solution is adopted.

\begin{figure}[htbp]
    \centering
    \includegraphics[width=\linewidth]{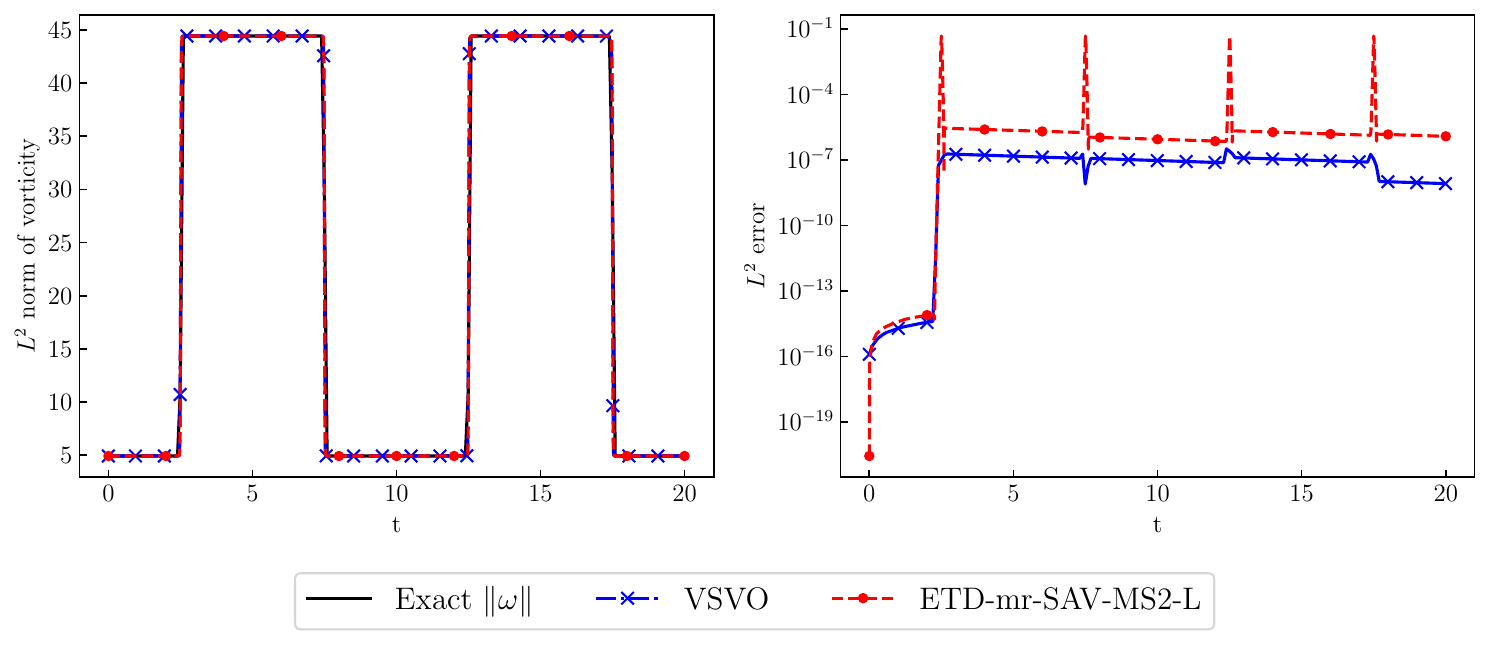}
    \caption{Evolution of the $L^2$ norm of the vorticity and the $L^2$ error computed by the VSVO method and the ETD-mr-SAV-MS2-L scheme with constant stepsizes, respectively.}
    \label{fig:VSVO_1}
\end{figure}

\begin{figure}[htbp]
    \centering
    \includegraphics[width=\linewidth]{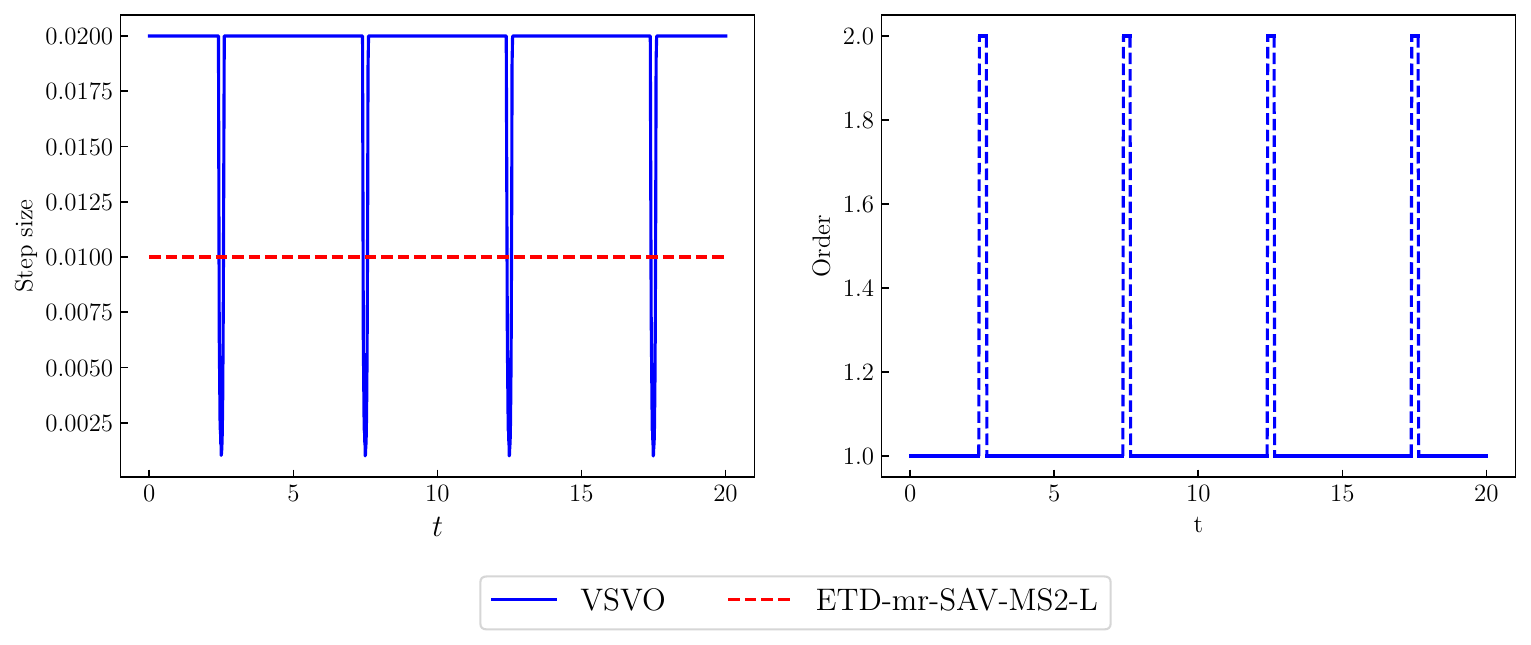}
    \caption{Adaptive stepsizes and order selection for the VSVO scheme.}
    \label{fig:VSVO_2}
\end{figure}

\section{Conclusion}\label{sec:6}

We have developed a linear, uniquely solvable, variable-step embedded exponential time-differencing (ETD) scheme for the incompressible Navier--Stokes equations in vorticity--streamfunction formulation on a two-dimensional periodic domain. The method consists of a second-order scheme together with an embedded first-order companion, providing a natural mechanism for adaptive time stepping and a posteriori error control. The scheme is long-time stable in the sense that the discrete vorticity admits a uniform-in-time bound in $L^\infty(0,\infty;L^2)$, independently of the time-step sizes and for arbitrary Reynolds numbers, provided that the external forcing is uniformly bounded in time in $L^2$. The construction combines the ETD framework with a mean-reverting scalar auxiliary variable (mr-SAV) formulation, whose mean-reverting mechanism is essential for recovering the dissipative structure needed for long-time stability.

A key feature of the present method is that each time step involves only linear subproblems. More precisely, in the periodic setting the update reduces to the evaluation of two time-dependent heat equations together with a linear equation for the scalar auxiliary variable. This fully linear structure distinguishes the present approach from earlier ETD--mr-SAV formulations that require the solution of a nonlinear algebraic equation at each step, while still retaining the desirable long-time stability property. The numerical results confirm the predicted second-order temporal accuracy, the uniform-in-time stability of the method, and the effectiveness of the embedded adaptive strategy in adjusting the time-step size to the evolving dynamics.

We have also included preliminary numerical tests of a variable-order/variable-step (VOVS/VSVO) approach based on a variable-step BDF2 discretization for the Navier--Stokes equations. Although this direction is not the main focus of the present paper, the results suggest that it may provide another viable adaptive strategy for simulations that involve rapid temporal transition.

Several natural directions remain for future work. On the analytical side, it would be important to establish a rigorous convergence analysis for the proposed linear embedded ETD scheme, including error estimates under variable time stepping, and to investigate the convergence of long-time statistical quantities generated by the numerical dynamics. On the algorithmic side, an interesting next step is the development of higher-order embedded pairs or triplets that simultaneously retain uniform-in-time bounds, variable-step capability, and low per-step complexity. Such schemes would provide a broader framework for testing and refining adaptive strategies, including variable-order/variable-step approaches. Other directions include extending the methodology beyond the periodic setting, and applying the linear ETD--mr-SAV framework to more complex dissipative fluid models, such as the Boussinesq equations and coupled multiphase flow systems.

\ignore{
We have developed two variable-step, second-order-in-time exponential time-differencing (ETD) schemes for the incompressible Navier--Stokes equations in a periodic domain. Both schemes are long-time stable in the sense that the numerical solutions admit uniform-in-time bounds in the space $L^\infty(0,\infty;L^2)$, independently of the time-step size and the viscosity parameter. The proposed schemes leverage a recently developed mean-reverting variant of the scalar auxiliary variable (SAV) framework, which plays a crucial role in ensuring long-time stability. For the second scheme, we further incorporate a novel high-order correction strategy to enhance accuracy while retaining stability. Importantly, both schemes are fully solvable for arbitrary parameter values.

The first method, referred to as the ETD-mr-SAV-MS2 scheme, relies on Talbot's algorithm for numerical inverse Laplace transforms to evaluate the auxiliary variable and therefore requires multiple Stokes solves per time step. In the periodic setting considered here, these Stokes solves can be carried out efficiently and incur relatively low computational cost. The second method, the ETD-mr-gSAV-MS2 scheme, is more economical: it requires only two Stokes solves and the solution of a scalar cubic algebraic equation at each time step. In addition, we developed a time-adaptive strategy based on embedding a first-order version of the scheme within the second-order method to dynamically determine the time-step size.

The numerical experiments presented in this work corroborate the theoretical analysis. In particular, the results demonstrate the expected second-order temporal convergence and unconditional long-time stability under variable time stepping. Moreover, the adaptive strategy exhibits strong potential in automatically adjusting the time-step size in response to the temporal evolution of the solution, leading to significant computational savings.

While the results reported here are encouraging, several important directions merit further investigation. These include establishing a rigorous convergence analysis and a systematic study of the convergence of long-time statistical properties, developing higher-order long-time stable time-marching schemes, extending the methodology to non-periodic boundary conditions, and applying the proposed framework to more complex fluid models, such as the Boussinesq equations for thermal convection and two-phase flow models.
}

\section*{Acknowledgment}
The second author acknowledges helpful conversation with Professors Wenbin Chen, Qiang Du, and Jie Shen.

\section*{Declarations}

\textbf{Funding} 
This work was supported in part by NSFC12271237.

\textbf{Conflict of interest} 
The authors declare that they have no known competing financial interests or personal relationships that could have appeared to influence the work reported in this paper.

\textbf{Data availability statement} 
The data and codes generated during the current study are available from the corresponding author upon reasonable request.

\ignore{
\appendix
\section{Consistency of ETD-mr-SAV-ms2 scheme}
At first, utilizing the variation-of-constant formula, the exact solution of the mr-SAV extended system \eqref{eqn:mr_sav} can be expressed as follows:
\begin{subequations}\label{eqn:exact_solution_q}
\begin{align}
  &\omega^{n+1} = e^{-\tau_{n+1} \nu \mathcal{L}} \omega^{n} + \int_{0}^{\tau_{n+1}} e^{-\nu (\tau_{n+1} - s) \mathcal{L}} (f(t_n + s) - q(t_n + s) B(\omega(t_n + s),\omega(t_n + s))) ds\\
  &q^{n+1} = e^{-\tau_{n+1} \gamma} q^{n} + \int_{0}^{\tau_{n+1}} e^{-\gamma (\tau_{n+1} - s)} \left(\left\langle  B(\omega(t_n + s), \omega(t_n + s)), \omega(t_n + s) \right\rangle + \gamma\right) ds
\end{align}
\end{subequations}

In addition, for the corresponding approximated system \eqref{eqn:2nd_msSAV_2}, the solution satisfies
\begin{subequations}\label{eqn:approx_solution_u}
\begin{align}
  &\omega^{n+1} = e^{-\tau_{n+1} \nu \mathcal{L}} \omega^{n} + \int_{0}^{\tau_{n+1}} e^{-\nu (\tau_{n+1} - s) \mathcal{L}} (f^{n+\frac{1}{2}} - q(t_n + s)  B(\tilde{\omega}^{n+\frac{1}{2}}, \tilde{\omega}^{n+\frac{1}{2}})) ds,\\
  &q^{n+1} = e^{-\tau_{n+1} \gamma} q^{n} + \int_{0}^{\tau_{n+1}} e^{-\gamma (\tau_{n+1} - s)} \left(\langle  B(\tilde{\omega}^{n+\frac{1}{2}}, \tilde{\omega}^{n+\frac{1}{2}}), \omega(t_n + s) \rangle + \gamma\right) ds.
\end{align}
\end{subequations}
Here, $F^{n+\frac{1}{2}} = F(t_{n} + \frac{1}{2}\tau_{n+1})$ denotes the midpoint value of the external force term, and $\tilde{\omega}^{n+\frac{1}{2}} = \frac{3}{2} \omega(t_{n}) - \frac{1}{2}\omega(t_{n - 1})$ is the second order extrapolation approximation fro the midpoint value $\omega(t_{n}+\frac{1}{2}\tau_{n+1})$.

Subtracting the exact solution \eqref{eqn:exact_solution_q} from the approximate solution \eqref{eqn:approx_solution_q}, we derive the velocity residual $R_{\omega}$ and the auxiliary variable residual $R_{\bm q}$ as follows: 
\begin{equation}
  \begin{aligned}
  R_\omega =& \int_{0}^{\tau_{n+1}} e^{-\nu (\tau_{n+1} - s) \mathcal{L}}(f(t_n + s) - f(t_n + \frac{1}{2}\tau_{n+1}))  ds   \\
   & - \int_{0}^{\tau_{n+1}} e^{-\nu (\tau_{n+1} - s) \mathcal{L} } q(t_n + s) ( B(\omega(t_n + s),\omega(t_n + s)) - B(\omega(t_n + \frac{1}{2}\tau_{n+1}), \omega(t_n + \frac{1}{2}\tau_{n+1}))) ds \\
   & - \int_{0}^{\tau_{n+1}} e^{-\nu (\tau_{n+1} - s) \mathcal{L} } q(t_n + s) ( B(\omega(t_n + \frac{1}{2}\tau_{n+1}), \omega(t_n + \frac{1}{2}\tau_{n+1})) -  B(\tilde{\omega}^{n+\frac{1}{2}}, \tilde{\omega}^{n+\frac{1}{2}}))  ds,
  \end{aligned}
\end{equation}
and 
\begin{equation}
  \begin{aligned}
  R_q = & \int_{0}^{\tau_{n+1}} e^{-\gamma (\tau_{n+1} - s)} 
  \langle \bm B(\tilde{\omega}^{n+\frac{1}{2}}, \tilde{\omega}^{n+\frac{1}{2}} )  
  - \bm B(\omega(t_n + \frac{\tau_{n+1}}{2}), \omega(t_n + \frac{\tau_{n+1}}{2})) , \omega(t_n + s) \rangle ds \\
  &+ \int_{0}^{\tau_{n+1}} e^{-\gamma (\tau_{n+1} - s)}  \langle \bm B(\omega(t_n + \frac{\tau_{n+1}}{2}), \omega(t_n + \frac{\tau_{n+1}}{2})) - \bm B(\omega(t_n + s),\omega(t_n + s)), \omega(t_n + s) \rangle  ds.
  \end{aligned}
\end{equation}

To avoid discussions regarding the regularity of the solution, we analyze the consistency of the ETD-mr-SAV-MS2 scheme under the assumptions that the solution to the extended system is sufficiently smooth, the Jacobian matrix of the nonlinear term $B(\omega,\omega)$ is uniformly bounded in the solution space, and the external force term $f$ is Lipschitz continuous and twice continuously differentiable with respect to time. Based on these assumptions, we can expand the external force term and the nonlinear term as follows 
\begin{equation}
    f(t_{n} + s) - f(t_n + \frac{1}{2}\tau_{n+1}) = \frac{d}{dt}f(t_n + \frac{1}{2}\tau_{n+1}) (s- \frac{1}{2}\tau_{n+1}) + \frac{1}{2}\frac{d^2}{dt^2}f(t_{n} + \xi_{n+1})(s- \frac{1}{2}\tau_{n+1})^2
\end{equation}
and
\begin{equation}
\begin{aligned}
    &B(\omega(t_n + s),\omega(t_n + s)) - B(\omega(t_n + \frac{1}{2}\tau_{n+1}),\omega(t_n + \frac{1}{2}\tau_{n+1})) \\
    =& \frac{d}{dt} \Big( B(\omega(t_n + \frac{1}{2}\tau_{n+1}),\omega(t_n + \frac{1}{2}\tau_{n+1})) \Big)(s - \frac{1}{2}\tau_{n+1}) \\
    &+ \frac{1}{2} \frac{d^2}{dt^2} \Big(B(\omega(t_n + \xi_{n+1}),\omega(t_n +\xi_{n+1})) \Big)(s - \frac{1}{2}\tau_{n+1})^2
\end{aligned}
\end{equation}
where $\xi_{n+1}$ is the value between $s$ and $\frac{1}{2}\tau_{n+1}$.
Furthermore, by the uniform boundedness assumption of the Jacobian matrix of $B(\omega,\omega)$ and the property of the second-order extrapolation formula, we have
\begin{equation}
    \Vert B(\omega(t_n + \frac{1}{2}\tau_{n+1}), \omega(t_n + \frac{1}{2}\tau_{n+1})) -  B(\tilde{\omega}^{n+\frac{1}{2}}, \tilde{\omega}^{n+\frac{1}{2}}) \Vert \leq C\Vert \omega(t_n + \frac{1}{2}\tau_{n+1}) - \tilde{\omega}^{n+\frac{1}{2}} \Vert = \mathcal{O}(\tau_{n+1}^2)
\end{equation}

By leveraging the aforementioned expansion and combine with integral estimation, we can directly derive that $R_{\omega} = \mathcal{O}(\tau_{n+1}^3)$ and $R_{q} = \mathcal{O}(\tau_{n+1}^3)$. This result comfirms the consistency of the ETD-mr-SAV-ms2 scheme. Furthermore, since the truncation error is $\mathcal{O}(\tau_{n+1}^3)$, it implies that the ETD-mr-SAV-ms2 scheme achieves second-order temporal accuracy.
}

\bibstyle{wileyNJD-Chicago}
\bibliography{references}

\end{document}